\newtheorem{theorem}{Theorem}
\newtheorem{proposition}{Proposition}
\theoremstyle{remark}
\newtheorem{remark}{Remark}
\theoremstyle{definition}
\begin{document}

\title[Elliptic problems in the sense of B.~Lawruk on refined scales]{Elliptic problems in the sense of B.~Lawruk\\on two-sided refined scales of spaces}


\author[I. Chepurukhina]{Iryna S. Chepurukhina}

\address{Institute of Mathematics, National Academy of Sciences of Ukraine,
3 Tereshchenkivs'ka, Kyiv, 01601, Ukraine}

\email{Chepuruhina@mail.ru}


\author[A. Murach]{Aleksandr A. Murach}

\address{Institute of Mathematics, National Academy of Sciences of Ukraine,
3 Tereshchenkivs'ka, Kyiv, 01601, Ukraine}

\email{murach@imath.kiev.ua}


\subjclass[2010]{Primary 35J40, 46E35}

\dedicatory{To the blessed memory of Professor V.~V.~Sharko}


\keywords{Elliptic boundary-value problem, slowly varying function, H\"ormander space, two-sided refined scale, Fredholm operator, a~priori estimate for solutions, local regularity of solutions}

\begin{abstract}
We investigate elliptic boundary-value problems with additional unknown functions on the boundary of a Euclidean domain. These problems were introduced by Lawruk. We prove that the operator corresponding to such a problem is bounded and Fredholm on two-sided refined scales built on the base of the isotropic H\"ormander inner product spaces. The regularity of the distributions forming these spaces are characterized by a real number and an arbitrary function that varies slowly at infinity in the sense of Karamata. For the generalized solutions to the problem, we prove theorems on a priori estimates and local regularity in these scales. As applications, we find new sufficient conditions under which the solutions have continuous classical derivatives of a prescribed order.
\end{abstract}

\maketitle

\section{Introduction}\label{sec1}

The paper is devoted to the investigation of the operators generated by
elliptic bound\-ary-value problems with additional unknown functions on the boundary of a Euclidean domain. Such problems were introduced by B.~Lawruk \cite{Lawruk63a, Lawruk63b, Lawruk65}. They appear naturally if we pass from a general (nonregular) elliptic  boundary-value problem to its formally adjoint problem. Moreover, the class of these problems are closed with respect to this passage. As to applications, various problems in hydrodynamics and the theory of elasticity proved to belong to this class \cite{AslanyanVassilievLidskii81, Garlet90, NazarovPileckas93}.

This class has been investigated completely enough in the Sobolev spaces; see the monographs by V.~A. Kozlov, V.~G. Maz'ya, J.~Rossmann \cite[Chapt.~3]{KozlovMazyaRossmann97} and Ya.~Roitberg \cite[Chapt.~2]{Roitberg99}. Among the proved results are theorems on the Fredholm property of the operators corresponding to the problem under investigation, on the  isomorphisms generated by these operators, on a priori estimates for solutions to the problem, theorems on the increase in local regularity of these solutions. These results have been established for the two-sided scales (of function spaces) that modify the classical Sobolev scale in the sense of Ya.~Roitberg \cite{Roitberg64, Roitberg65} (see also his monograph \cite[Sect.~2]{Roitberg96}, were this modification is systematically used in the theory of usual elliptic boundary-value problems).

The purpose of this paper is to prove versions of these theorems for two-sided refined scales built on the base of the isotropic H\"ormander inner product spaces
$$
H^{s,\varphi}(\mathbb{R}^{n}):=
\bigl\{w\in\mathcal{S}'(\mathbb{R}^{n}):\,
\langle\xi\rangle^{s}\varphi(\langle\xi\rangle)\widehat{w}(\xi)\in
L_{2}(\mathbb{R}^{n},d\xi)\bigr\},
$$
parametrized with the number $s\in\mathbb{R}$ and the function $\varphi:[1,\infty)\rightarrow(0,\infty)$ that varies slowly at infinity in the sense of J.~Karamata. Here, $\widehat{w}$ is the Fourier transform of the tempered distribution $w$, whereas $\langle\xi\rangle:=(1+|\xi|^{2})^{1/2}$. These spaces form the refined Sobolev scale, which was selected and investigated by V.~A.~Mikhailets and A.~A.~Murach \cite{MikhailetsMurach05UMJ5, MikhailetsMurach06UMJ3,  MikhailetsMurach08MFAT1}. This scale has an important interpolation property; namely, each space $H^{s,\varphi}(\mathbb{R}^{n})$ is a result of the interpolation with an appropriate function parameter of the Sobolev inner product spaces $H^{s-\varepsilon}(\mathbb{R}^{n})$ and $H^{s+\delta}(\mathbb{R}^{n})$ with $\varepsilon,\delta>0$.

V.~A.~Mikhailets and A.~A.~Murach built the theory of solvability of general elliptic boundary-value problems in the one-sided refined Sobolev scale and its two-sided modifications [16--20, 22--24]. The mentioned interpolation is a key method in their theory. In this connection, we also note papers \cite{AnopMurach14MFAT2, AnopMurach14UMJ7, Slenzak74}. However, this theory does not involve the important class of elliptic boundary-value problems in the sense of B.~Lawruk.

This paper consists of seven sections. Section~\ref{sec1} is Introduction. In Section~\ref{sec2}, we state an elliptic boundary-value problem in the sense of B.~Lawruk and consider the corresponding Green formula and formally adjoint problem. In Section~\ref{sec3}, we give the definitions of function spaces that form the two-sided refined scales. The main results of the paper are formulated in Section~\ref{sec4}. They are the theorems on the character of solvability of the problem under investigation and on properties of its generalized solutions. Section~\ref{sec5} is devoted to the method of interpolation with a function parameter of Hilbert spaces and its applications to the refined scales. The main results are proved in Section~\ref{sec6}. In Section~\ref{sec7}, we give some applications of the refined scales to the investigation of classical smoothness of the generalized solutions.

\section{Statement of the problem}\label{sec2}

Let $\Omega$ be a bounded domain in the Euclidean space $\mathbb{R}^{n}$, with $n\geq2$. Suppose that the boundary $\Gamma:=\partial\Omega$ of $\Omega$ is an infinitely smooth closed manifold of dimension $n-1$. As usual, $\overline{\Omega}:=\Omega\cup\Gamma$. Let $\nu(x)$ denote the unit vector of the inner normal to $\Gamma$ at a point $x\in\Gamma$.

We arbitrarily choose integers $q\geq1$, $\varkappa\geq1$, $m_{1},\ldots,m_{q+\varkappa}\in[0,2q-1]$ and $r_{1},\ldots,r_{\varkappa}$.
We consider the linear boundary-value problem in the domain $\Omega$ with $\varkappa$ additional unknown functions on the boundary $\Gamma$:
\begin{gather}\label{2f1}
Au=f\quad\mbox{in}\quad\Omega,\\
B_{j}u+\sum_{k=1}^{\varkappa}C_{j,k}v_{k}=g_{j}\quad\mbox{on}\quad\Gamma,
\quad j=1,...,q+\varkappa.\label{2f2}
\end{gather}
Here,
$$
A:=A(x,D):=\sum_{|\mu|\leq 2q}a_{\mu}(x)D^{\mu}
$$
is a linear differential operator on $\overline{\Omega}$ of the even order $2q$. Moreover, every
$$
B_{j}(x,D):=\sum_{|\mu|\leq m_{j}}b_{j,\mu}(x)D^{\mu}
$$
is a linear boundary differential operator on $\Gamma$ of order $m_{j}$, and each $C_{j,k}:=C_{j,k}(x,D_{\tau})$ is a linear tangent differential operator on $\Gamma$ with $\mathrm{ord}\,C_{j,k}\leq m_{j}+r_{k}$. All the coefficients of these differential operators are infinitely smooth complex-valued functions given on $\overline{\Omega}$ and $\Gamma$ respectively.

We use the following standard notation:
$\mu:=(\mu_{1},\ldots,\mu_{n})$ is a multi-index with $|\mu|:=\mu_{1}+\ldots+\mu_{n}$, and
$D^{\mu}:=D_{1}^{\mu_{1}}\ldots D_{n}^{\mu_{n}}$ with  $D_{\ell}:=i\partial/\partial x_{\ell}$, where $i$ is imaginary unit and  $x=(x_1,\ldots,x_n)$ is an arbitrary point in $\mathbb{R}^{n}$. Moreover, we put $D_{\nu}:=i\partial/\partial\nu(x)$ and  $\xi^{\mu}:=\xi_{1}^{\mu_{1}}\ldots\xi_{n}^{\mu_{n}}$ for  $\xi=(\xi_{1},\ldots\xi_{n})\in\mathbb{C}^{n}$.

The function $u$ given on $\Omega$ and all the functions  $v_{1},\ldots,v_{\varkappa}$ given on $\Gamma$ are unknown in
the boundary-value problem \eqref{2f1}, \eqref{2f2}. Note that all functions and distributions are assumed to be complex-valued in the paper.

We suppose that the boundary-value problem \eqref{2f1}, \eqref{2f2} is elliptic  in $\Omega$ in the sense of B.~Lawruk \cite{Lawruk63a}. We recall the corresponding definition (see also \cite[Subsect.~3.1.3]{KozlovMazyaRossmann97}).

Let $A^{(0)}(x,\xi)$, $B_{j}^{(0)}(x,\xi)$, and $C_{j,k}^{(0)}(x,\tau)$
denote the principal symbols of the differential operators $A(x,D)$, $B_{j}(x,D)$, and $C_{j,k}(x,D_{\tau})$ respectively. Recall that
$$
A^{(0)}(x,\xi):=\sum_{|\mu|=2q}a_{\mu}(x)\xi^{\mu},\quad\mbox{with}\;\; x\in\overline{\Omega}\;\;\mbox{and}\;\;\xi\in\mathbb{C}^{n},
$$
is a homogeneous polynomial of order $2q$ in $\xi$ and that
$$
B_{j}^{(0)}(x,\xi):=\sum_{|\mu|=m_{j}}b_{j,\mu}(x)\xi^{\mu},\quad\mbox{with}\;\; x\in\Gamma\;\;\mbox{and}\;\;\xi\in\mathbb{C}^{n},
$$
is a homogeneous polynomial of order $m_{j}$ in $\xi$. Moreover, for each point $x\in\Gamma$, the expression $C^{(0)}_{j,k}(x,\tau)$ is a homogeneous polynomial of order $m_{j}+r_{k}$ in $\tau$, where  $\tau$ is a tangent vector to the boundary $\Gamma$ at the point $x$. (If $\mathrm{ord}\,C_{j,k}<m_{j}+r_{k}$, then $C^{(0)}_{j,k}(x,\tau):=\nobreak0$.)

The boundary-value problem \eqref{2f1}, \eqref{2f2} is called elliptic in the domain $\Omega$ if the following three conditions are satisfied:
\begin{itemize}
\item [(i)] The differential operator $A(x,D)$ is elliptic at each point $x\in\overline{\Omega}$; i.e., $A^{(0)}(x,\xi)\neq\nobreak0$ for an arbitrary vector $\xi\in\mathbb{R}^{n}\setminus\{0\}$.
\item [(ii)] The differential operator $A(x,D)$ is properly elliptic at each point $x\in\Gamma$; i.e., for an arbitrary tangent vector $\tau\neq0$ to $\Gamma$ at $x$, the polynomial   $A^{(0)}(x,\tau+\zeta\nu(x))$ in $\zeta\in\mathbb{C}$ has $q$ roots with positive imaginary part and $q$ roots with negative  imaginary part (of course, these roots are calculated with regard for their multiplicity ).
\item [(iii)] The system of boundary-value conditions \eqref{2f2} covers equation \eqref{2f1} at each point $x\in\Gamma$. This means that, for an arbitrary vector $\tau\neq0$ from condition (ii), the boundary-value problem
    \begin{gather*}
    A^{(0)}(x,\tau+D_{t}\,\nu(x))\theta(t)=0\quad\mbox{for}\quad t>0,\\
    B_{j}^{(0)}(x,\tau+D_{t}\,\nu(x))\theta(t)\big|_{t=0}+
    \sum_{k=1}^{\varkappa}C_{j,k}^{(0)}(x,\tau)\lambda_{k}=0,\\ j=1,...,q+\varkappa,
    \end{gather*}
    has only the trivial (zero) solution. This problem is posed relative to the unknown function $\theta\in C^{\infty}([0,\infty))$ with $\theta(t)\rightarrow0$ as $t\rightarrow\infty$ and the unknown complex-valued numbers $\lambda_{1},\ldots,\lambda_{\varkappa}$. Here, $A^{(0)}(x,\tau+D_{t}\,\nu(x))$ and $B_{j}^{(0)}(x,\tau+D_{t}\,\nu(x))$ are differential operators with respect to $D_{t}:=i\partial/\partial t$. We obtain them if we put $\zeta:=D_{t}$ in the polynomials $A^{(0)}(x,\tau+\zeta\nu(x))$ and $B_{j}^{(0)}(x,\tau+\zeta\nu(x))$, in $\zeta$, respectively.
\end{itemize}

Note that condition (ii) follows from condition (i) in the case of $n\geq3$.

Various examples of the elliptic boundary-value problem \eqref{2f1}, \eqref{2f2} are given in \cite[Subsect.~3.1.5]{KozlovMazyaRossmann97}. Specifically, the boundary-value problem
\begin{equation*}
\Delta u=f\;\;\mbox{in}\;\;\Omega,\quad
u+v=g_{1}\;\;\mbox{and}\;\;D_{\nu}u+D_{\tau}v=g_{2}\;\;\mbox{on}\;\;\Gamma
\end{equation*}
is elliptic in $\Omega$. Here, $n=2$, $\varkappa=1$, $\Delta$ is the Laplace operator, and  $D_{\tau}:=i\partial/\partial\tau$, where $\partial/\partial\tau$ is the derivative along the curve $\Gamma$.

With the problem \eqref{2f1}, \eqref{2f2}, we connect the linear mapping
\begin{equation}\label{2f3}
\begin{gathered}
\Lambda:(u,v_{1},...,v_{\varkappa})\mapsto
\biggl(Au,\,B_{1}u+\sum_{k=1}^{\varkappa}C_{1,k}v_{k},...,
B_{q+\varkappa}u+\sum_{k=1}^{\varkappa}C_{q+\varkappa,k}v_{k}\biggr),\\
\mbox{with}\quad u\in C^{\infty}(\overline{\Omega})\quad\mbox{and}\quad
v_{1},\ldots,v_{\varkappa}\in C^{\infty}(\Gamma).
\end{gathered}
\end{equation}
We will investigate properties of the extension (by continuity) of this mapping in appropriate couples of Hilbert spaces that form certain two-sided refined scales.

To describe the range of this extension, we need the following Green formula (see \cite[Theorem~3.1.1]{KozlovMazyaRossmann97}:
\begin{equation*}
\begin{gathered}
(Au,w)_{\Omega}+\sum_{j=1}^{q+\varkappa}\biggl(B_{j}u+
\sum_{k=1}^{\varkappa}C_{j,k}v_{k},h_{j}\biggr)_{\Gamma}=\\
=(u,A^{+}w)_{\Omega}+\sum_{j=1}^{2q}\biggl(D_{\nu}^{j-1}u,K_{j}w+
\sum_{k=1}^{q+\varkappa}Q_{k,j}^{+}h_{k}\biggr)_{\Gamma}+\sum_{j=1}^{\varkappa}\biggl(v_{j},
\sum_{k=1}^{q+\varkappa}C_{k,j}^{+}h_{k}\biggr)_{\Gamma},
\end{gathered}
\end{equation*}
where $u,w\in C^{\infty}(\overline{\Omega})$, $v=(v_{1},\ldots,v_{\varkappa})\in(C^{\infty}(\Gamma))^{\varkappa}$,  $h=(h_{1},\ldots,h_{q+\varkappa})\in(C^{\infty}(\Gamma))^{q+\varkappa}$ are arbitrary, whereas $(\cdot,\cdot)_{\Omega}$ and $(\cdot,\cdot)_{\Gamma}$ are the inner products in the Hilbert spaces $L_{2}(\Omega)$ and $L_{2}(\Gamma)$ of functions square integrable over $\Omega$ and $\Gamma$ respectively.

Here, $A^{+}$ denotes the differential operator that is formally adjoint to $A$ with respect to $(\cdot,\cdot)_{\Omega}$. Moreover, $C_{k,j}^{+}$ and $Q_{k,j}^{+}$ denote the tangent differential operators that are formally adjoint to $C_{k,j}$ and $Q_{k,j}$ respectively relative to $(\cdot,\cdot)_{\Gamma}$, the tangent differential operators $Q_{k,j}$
appearing in the representation of the boundary differential operators $B_{j}$ in  the form
$$
B_{j}(x,D)=
\sum_{k=1}^{2q}Q_{j,k}(x,D_{\tau})D_{\nu}^{k-1}.
$$
(Of course, if $k>m_{j}$, then $Q_{j,k}=0$). Finally,
$$
K_{j}:=K_{j}(x,D):=\sum_{k=1}^{2q-j+1}T_{j,k}(x,D_{\tau})D_{\nu}^{k-1},
$$
where every $T_{j,k}(D_{\tau})$ is a certain tangent differential operator on $\Gamma$ of the order $\mathrm{ord}\,T_{j,k}\leq2q+1-j-k$.

In view of the Green formula, we consider the following boundary-value problem in $\Omega$ with $q+\varkappa$ additional unknown function on $\Gamma$:
\begin{gather}\label{2f4}
A^{+}w=\omega\quad\mbox{in}\quad\Omega,\\
\label{2f5}
K_{j}w+\sum_{k=1}^{q+\varkappa}Q_{k,j}^{+}h_{k}=\chi_{j}\quad
\mbox{on}\quad\Gamma,\quad j=1,...,2q,\\
\label{2f6}
\sum_{k=1}^{q+\varkappa}C_{k,j}^{+}h_{k}=\chi_{2q+j}
\quad\mbox{on}\quad\Gamma,\quad j=1,...,\varkappa.
\end{gather}
This problem is formally adjoint to the problem \eqref{2f1}, \eqref{2f2} with respect to the Green formula. Note that the problem \eqref{2f1}, \eqref{2f2} is elliptic if and only if the problem \eqref{2f4}, \eqref{2f5}, \eqref{2f6} is elliptic (see \cite[Theorem~3.1.2]{KozlovMazyaRossmann97}).

\section{Refined scales of spaces}\label{sec3}

We will investigate the properties of the elliptic boundary-value problem \eqref{2f1}, \eqref{2f2} in appropriate couples of Hilbert spaces that form the two-sided refined scale introduced in \cite{MikhailetsMurach08UMJ4}. This scale is constructed on the base of some isotropic H\"ormander inner product spaces. In this section, we give the definitions of these spaces and the scale and, moreover, discuss some of their properties. We follow \cite{MikhailetsMurach08UMJ4} (see also \cite[Subsect. 4.2.2]{MikhailetsMurach10, MikhailetsMurach14}).

We start with the definition of the H\"ormander spaces $H^{s,\varphi}(\mathbb{R}^{n})$, with $s\in\mathbb{R}$ and $\varphi\in\mathcal{M}$. Here and below, $\mathcal{M}$ denotes the set of all Borel measurable functions $\varphi:[1,\infty)\rightarrow(0,\infty)$ such that both the functions $\varphi$ and $1/\varphi$ are bounded on each compact interval $[1,b]$, with $1<b<\infty$, and that the function $\varphi$ is slowly varying at infinity in the sense of J.~Karamata \cite{Karamata30a}, i.e., $\varphi(\lambda t)/\varphi(t)\rightarrow 1$ as $t\rightarrow\infty$ for each  $\lambda>0$.

Note that the slowly varying functions are well investigated and have various applications \cite{BinghamGoldieTeugels89, Seneta76}. The standard example of such functions is
$$
\varphi(t):=(\log t)^{r_{1}}(\log\log
t)^{r_{2}}\ldots(\underbrace{\log\ldots\log}_{k\;\mbox{\small times}}
t)^{r_{k}}\quad\mbox{of}\quad t\gg1,
$$
where $k\in\mathbb{Z}$, with $k\geq1$, and $r_{1},\ldots,r_{k}\in\mathbb{R}$ are arbitrary parameters.

Let $s\in\mathbb{R}$ and $\varphi\in\mathcal {M}$. By definition, the complex linear space $H^{s,\varphi}(\mathbb{R}^{n})$, with $n\geq1$, consists of all tempered distributions $w$ on $\mathbb{R}^{n}$ that the Fourier transform $\widehat{w}$ of $w$ is locally Lebesgue integrable over $\mathbb{R}^{n}$ and satisfies the condition
$$
\int\langle\xi\rangle^{2s}\varphi^{2}(\langle\xi\rangle)\,
|\widehat{w}(\xi)|^{2}\,d\xi<\infty.
$$
Here, the integral is taken over $\mathbb{R}^{n}$, and
$\langle\xi\rangle:=(1+|\xi|^{2})^{1/2}$. The inner product in this space is defined by the formula
$$
(w_{1},w_{2})_{H^{s,\varphi}(\mathbb {R}^{n})}:=
\int\langle\xi\rangle^{2s}\varphi^{2}(\langle\xi\rangle)
\widehat{w_{1}}(\xi)\overline{\widehat{w_{2}}(\xi)}d\xi
$$
and induces the norm $\|w\|_{H^{s,\varphi}(\mathbb{R}^{n})}:=
(w,w)_{H^{s,\varphi}(\mathbb{R}^{n})}^{1/2}$.

The space $H^{s,\varphi}(\mathbb{R}^{n})$ is a special isotropic Hilbert case of the spaces $\mathcal{B}_{p,\mu}$ introduced and investigated by L.~H\"ormander \cite[Sect.~2.2]{Hermander63} (see also \cite[Sect.~10.1]{Hermander83}). Namely, $H^{s,\varphi}(\mathbb{R}^{n})=\mathcal{B}_{p,\mu}$ provided that $p=2$ and $\mu(\xi)=\langle\xi\rangle^{s}\varphi(\langle\xi\rangle)$ for all $\xi\in\mathbb{R}^{n}$. The Hilbert spaces $\mathcal{B}_{2,\mu}$ were also investigated by L.~R.~Volevich and B.~P.~Paneah \cite[\S~2]{VolevichPaneah65}.

In the special case where $\varphi\equiv1$, the space $H^{s,\varphi}(\mathbb {R}^{n})$ becomes the Sobolev inner product space $H^{s}(\mathbb {R}^{n})$ of order $s\in\mathbb{R}$. Generally, we have the continuous and dense embeddings
\begin{equation}\label{2f7}
H^{s+\varepsilon}(\mathbb{R}^{n})\hookrightarrow H^{s,\varphi}(\mathbb{R}^{n})\hookrightarrow H^{s-\varepsilon}(\mathbb{R}^{n})
\quad\mbox{for every}\quad\varepsilon>0.
\end{equation}
They mean that, in the class of separable Hilbert spaces
$\{H^{s,\varphi}(\mathbb{R}^{n}):s\in\mathbb{R},\varphi\in\mathcal{M}\}$,
the function parameter $\varphi$ refines the main regularity (of distributions) characterized by the number $s$. This class is selected by V.~A.~Mikhailets and A.~A.~Murach \cite{MikhailetsMurach05UMJ5} and is called the refined Sobolev scale over $\mathbb{R}^{n}$ (see  \cite[Subsect.~1.3.3]{MikhailetsMurach14}).

We need analogs of the space $H^{s,\varphi}(\mathbb{R}^{n})$ for $\Omega$ and $\Gamma$. They are constructed in the standard way. Let us give the corresponding definitions \cite[Sect.~2.1 and 3.1]{MikhailetsMurach10, MikhailetsMurach14}.

By definition, the linear space $H^{s,\varphi}(\Omega)$ consists of
the restrictions $u=w\!\upharpoonright\!\Omega$ of all distributions $w\in H^{s,\varphi}(\mathbb{R}^{n})$ to $\Omega$. It is endowed with the norm
$$
\|u\|_{H^{s,\varphi}(\Omega)}:=
\inf\,\bigl\{\,\|w\|_{H^{s,\varphi}(\mathbb{R}^{n})}:\,
w\in H^{s,\varphi}(\mathbb{R}^{n}),\;\,
u=w\!\upharpoonright\!\Omega\bigr\}.
$$
The space $H^{s,\varphi}(\Omega)$ is Hilbert and separable with respect to this norm. The set $C^{\infty}(\overline{\Omega})$ is dense in this space.

The space $H^{s,\varphi}(\Gamma)$ consists of all distributions on $\Gamma$ that belong, in local coordinates, to $H^{s,\varphi}(\mathbb{R}^{n-1})$. Let us give the detailed definition. From $C^{\infty}$-structure on $\Gamma$, we arbitrarily choose a finite collection of local charts $\alpha_j: \mathbb{R}^{n-1}\leftrightarrow\Gamma_{j}$, with $j=1,\ldots,\lambda$, that the open sets $\Gamma_{1},\ldots,\Gamma_{\lambda}$ form a covering of the manifold $\Gamma$. Moreover, we arbitrarily choose functions $\chi_j\in C^{\infty}(\Gamma)$, with $j=1,\ldots,\lambda$, that form a partition of unity on $\Gamma$ which satisfies the condition $\mathrm{supp}\,\chi_j\subset\Gamma_j$. Then, by definition, the linear space $H^{s,\varphi}(\Gamma)$ consists of all distributions $h$ on $\Gamma$ such that $(\chi_{j}h)\circ\alpha_{j}\in H^{s,\varphi}(\mathbb{R}^{n-1})$ for every $j\in\{1,\ldots,\lambda\}$.
Here, $(\chi_{j}h)\circ\alpha_{j}$ is the representation of $\chi_{j}h$ in the local chart $\alpha_{j}$. This space is endowed with the inner product
$$
(h_{1},h_{2})_{H^{s,\varphi}(\Gamma)}:=
\sum_{j=1}^{\lambda}\,((\chi_{j}h_{1})\circ\alpha_{j},
(\chi_{j}\,h_{2})\circ\alpha_{j})_{H^{s,\varphi}(\mathbb{R}^{n-1})}.
$$
The space $H^{s,\varphi}(\Gamma)$ is Hilbert and separable. It does not depend (up to equivalence of norms) on the indicated choice of the local charts and partition of unity \cite[Theorem~2.3]{MikhailetsMurach10, MikhailetsMurach14}. The set $C^{\infty}(\Gamma)$ is dense in $H^{s,\varphi}(\Gamma)$.

The spaces $H^{s,\varphi}(\Omega)$ and $H^{s,\varphi}(\Gamma)$, with $s\in\mathbb{R}$ and $\varphi\in\mathcal{M}$, form the refined Sobolev scales over $\Omega$ and $\Gamma$ respectively. In the case of $\varphi(t)\equiv1$, these spaces become the Sobolev inner product spaces.

Note the following connection between these spaces \cite[Subsect. 3.2.1]{MikhailetsMurach10, MikhailetsMurach14}. If $s>1/2$, then the trace mapping $u\mapsto u\!\upharpoonright\!\Gamma$, with $u\in C^{\infty}(\Gamma)$, extends uniquely (by continuity) to a bounded surjective operator $R_{\Gamma}:H^{s,\varphi}(\Omega)\rightarrow H^{s-1/2,\varphi}(\Gamma)$. Thus, for each distribution $u\in H^{s,\varphi}(\Omega)$, its trace $R_{\Gamma}u$ on $\Gamma$ is well defined provided that $s>1/2$. But it is impossible to define this trace reasonably in the $s<1/2$ case. Hence, we cannot investigate the boundary-value problem \eqref{2f1}, \eqref{2f2} with $u\in H^{s,\varphi}(\Omega)$, where $s$ is an arbitrary real number.

Therefore, we use a certain modification of the space
$H^{s,\varphi}(\Omega)$ to be able to investigate this problem for arbitrary real $s$. This modification is denoted by $H^{s,\varphi,(2q)}(\Omega)$ and introduced in \cite{MikhailetsMurach08UMJ4} by analogy with Ya.~A.~Roitberg's \cite{Roitberg64, Roitberg65} construction applied to the Sobolev scale (see also monographs \cite[Chapt.~III, Sect.~6]{Berezansky68} and \cite[Sect.~2.1]{Roitberg96}).

Previously, let us define the space $H^{s,\varphi,(0)}(\Omega)$. We put $H^{s,\varphi,(0)}(\Omega):=H^{s,\varphi}(\Omega)$ in the case of $s\geq0$. But, in the case of $s<0$, we let $H^{s,\varphi,(0)}(\Omega)$ define the dual Hilbert space to $H^{-s,1/\varphi}(\Omega)$ with respect to the inner product in $L_{2}(\Omega)$. Namely, if $s<0$, then $H^{s,\varphi,(0)}(\Omega)$ is the completion of $C^{\infty}(\overline{\Omega})$ with respect to the Hilbert norm
$$
\|u\|_{H^{s,\varphi,(0)}(\Omega)}:=
\sup\left\{\,\frac{|(u,w)_{\Omega}|}
{\;\quad\quad\quad\|w\|_{H^{-s,1/\varphi}(\Omega)}}\,:
\,w\in H^{-s,1/\varphi}(\Omega),\,w\neq0\right\}.
$$
Note that the mapping $u\mapsto\mathcal{O}u$, with $u\in C^{\infty}(\overline{\Omega})$, extends uniquely (by continuity) to an isometric isomorphism of the space $H^{s,\varphi,(0)}(\Omega)$, with $s<0$, onto the subspace $\{w\in H^{s,\varphi}(\mathbb{R}^{n}):\mathrm{supp}\,w\subseteq \overline{\Omega}\}$ of $H^{s,\varphi}(\mathbb{R}^{n})$. Here, $\mathcal{O}u:=u$ on $\overline{\Omega}$, and $\mathcal{O}u:=0$ on $\mathbb{R}^{n}\setminus\overline{\Omega}$.

We can now define the Hilbert space $H^{s,\varphi,(2q)}(\Omega)$. Let
$E_{2q}:=\{k-1/2:k=1,\ldots,2q\}$. In the case of $s\in\mathbb{R}\setminus E_{2q}$, the space $H^{s,\varphi,(2q)}(\Omega)$ is defined to be the  completion of $C^{\infty}(\overline{\Omega})$ with respect to the Hilbert norm
$$
\|u\|_{H^{s,\varphi,(2q)}(\Omega)}:=
\biggl(\|u\|_{H^{s,\varphi,(0)}(\Omega)}^{2}+
\sum_{k=1}^{2q}\;\|(D_{\nu}^{k-1}u)\!\upharpoonright\!\Gamma\|
_{H^{s-k+1/2,\varphi}(\Gamma)}^{2}\biggr)^{1/2}.
$$
In the case of $s\in E_{2q}$, the space $H^{s,\varphi,(2q)}(\Omega)$ is defined with the help of interpolation between Hilbert spaces, namely,
$$
H^{s,\varphi,(2q)}(\Omega):=
\bigl[H^{s-\varepsilon,\varphi,(2q)}(\Omega),
H^{s+\varepsilon,\varphi,(2q)}(\Omega)\bigr]_{1/2},
\quad\mbox{with}\quad0<\varepsilon<1.
$$
We will give the definition of this interpolation in Section~\ref{sec5}.
Note \cite[Theorem 7.1]{MikhailetsMurach08UMJ4} that the right-hand side of the last equality does not depend (up to equivalence of norms) on the choice of $\varepsilon$. In the Sobolev case of $\varphi\equiv1$, the space $H^{s,\varphi,(2q)}(\Omega)$ was introduced by Ya.~A.~Roitberg \cite{Roitberg64, Roitberg65}.

For arbitrary $s\in\mathbb{R}$, the linear mapping
$T_{2q}:u\mapsto\bigl(u,u\!\upharpoonright\!\Gamma,\ldots,
(D_{\nu}^{2q-1}u)\!\upharpoonright\!\Gamma\bigr)$, with $u\in C^{\infty}(\overline{\Omega})$, extends uniquely (by continuity) to a bounded operator
$$
T_{2q}:\,H^{s,\varphi,(2q)}(\Omega)\rightarrow
H^{s,\varphi,(0)}(\Omega)\oplus
\bigoplus_{k=1}^{2q}\,H^{s-k+1/2,\varphi}(\Gamma)=:
\Pi_{s,\varphi,(2q)}(\Omega,\Gamma).
$$
If $s\notin E_{2q}$, then this operator is isometric and then its range consists of all vectors
$$
(u_{0},u_{1},\ldots,u_{2q})\in\Pi_{s,\varphi,(2q)}(\Omega,\Gamma)
$$
such that $u_{k}=R_{\Gamma}D_{\nu}^{k-1}u_{0}$ for all integers $k\in\{1,\ldots,2q\}$ satisfying $s>k-1/2$ (see \cite[Theorem 4.2]{MikhailetsMurach08UMJ4}).

The Hilbert spaces $H^{s,\varphi,(2q)}(\Omega)$ and $H^{s,\varphi,(0)}(\Omega)$ are separable. They form the refined scales
\begin{equation}\label{2f8}
\bigl\{H^{s,\varphi,(2q)}(\Omega):
\,s\in\mathbb{R},\varphi\in\mathcal{M}\bigr\}\quad\mbox{and}\quad
\bigl\{H^{s,\varphi,(0)}(\Omega):
\,s\in\mathbb{R},\varphi\in\mathcal{M}\bigr\}
\end{equation}
over $\Omega$. These scales are two-sided because the parameter $s$ runs through the whole real line. Note that $H^{s,\varphi,(2q)}(\Omega)=H^{s,\varphi}(\Omega)$ for $s>2q-1/2$ and that $H^{s,\varphi,(0)}(\Omega)=H^{s,\varphi}(\Omega)$ for $s>-1/2$, the equalities of Hilbert spaces being fulfilled up to equivalence of norms \cite[Theorems 3.9 and 4.12]{MikhailetsMurach10, MikhailetsMurach14}.

In the Sobolev case of $\varphi\equiv1$, we will omit $\varphi$ in the designations of the spaces introduced in the paper. It follows from \eqref{2f7} that
\begin{gather}\label{2f9}
H^{s+\varepsilon,(2q)}(\Omega)\hookrightarrow H^{s,\varphi,(2q)}(\Omega)\hookrightarrow H^{s-\varepsilon,(2q)}(\Omega),\\\label{2f10}
H^{s+\varepsilon,(0)}(\Omega)\hookrightarrow H^{s,\varphi,(0)}(\Omega)\hookrightarrow H^{s-\varepsilon,(0)}(\Omega),\\
H^{s+\varepsilon}(\Gamma)\hookrightarrow H^{s,\varphi}(\Gamma)\hookrightarrow H^{s-\varepsilon}(\Gamma)\label{2f11}
\end{gather}
for every $\varepsilon>0$. These embeddings are compact and dense.

\section{The main results}\label{sec4}

Here, we will formulate our results concerning the properties of the elliptic boundary-value problem \eqref{2f1}, \eqref{2f2} on the two-sided refined scales \eqref{2f8} and $\{H^{s,\varphi}(\Gamma):s\in\mathbb{R},\varphi\in\mathcal{M}\}$.

With the problem \eqref{2f1}, \eqref{2f2} and its formally adjoint problem \eqref{2f4}, \eqref{2f5}, \eqref{2f6}, we associate the linear spaces $N$ and $N^{+}$ of infinitely smooth solutions to these problems in the homogeneous case. Namely, let $N$ consist of all solutions
$(u,v_{1},...,v_{\varkappa})\in C^{\infty}(\overline{\Omega})\times(C^{\infty}(\Gamma))^{\varkappa}$
to the problem \eqref{2f1}, \eqref{2f2} in the case where $f=0$ in $\Omega$ and all $g_{j}=0$ on~$\Gamma$. Besides, let $N^{+}$ consist of all solutions $(w,h_{1},...,h_{q+\varkappa})\in C^{\infty}(\overline{\Omega})\times(C^{\infty}(\Gamma))^{q+\varkappa}$
to the problem \eqref{2f4}, \eqref{2f5}, \eqref{2f6} in the case where $\omega=0$ in $\Omega$ and all $\chi_{j}=0$ and all $\chi_{2q+j}=0$ on $\Gamma$.
Since these problems are elliptic in $\Omega$, the spaces $N$ and $N^{+}$ are finite-dimensional \cite[Lemma~3.4.2]{KozlovMazyaRossmann97}.

\begin{theorem}\label{2th1}
For arbitrary $s\in\mathbb{R}$ and $\varphi\in\mathcal{M}$, the mapping \eqref{2f3} extends uniquely (by continuity) to a bounded operator
\begin{equation}\label{2f12}
\begin{gathered}
\Lambda:\,\mathcal{D}^{s,\varphi,(2q)}(\Omega,\Gamma):=H^{s,\varphi,(2q)}(\Omega)\oplus
\bigoplus_{k=1}^{\varkappa}H^{s+r_{k}-1/2,\varphi}(\Gamma)\rightarrow\\
\rightarrow H^{s-2q,\varphi,(0)}(\Omega)\oplus
\bigoplus_{j=1}^{q+\varkappa}H^{s-m_{j}-1/2,\varphi}(\Gamma)
=:\mathcal{E}^{s-2q,\varphi,(0)}(\Omega,\Gamma).
\end{gathered}
\end{equation}
This operator is Fredholm. Its kernel coincides with $N$, and its range consists of all vectors
\begin{equation}\label{2f13}
(f,g):=(f,g_{1},\ldots,g_{q+\varkappa})
\in\mathcal{E}^{s-2q,\varphi,(0)}(\Omega,\Gamma)
\end{equation}
such that
\begin{equation}\label{2f14}
(f,w)_{\Omega}+\sum_{j=1}^{q+\varkappa}(g_{j},h_{j})_{\Gamma}=0\quad
\mbox{for each}\quad(w,h_{1},\ldots,h_{q+\varkappa})\in N^{+}.
\end{equation}
The index of the operator \eqref {2f12} is equal to $\dim N-\dim N^{+}$ and does not depend on $s$ and~$\varphi$.
\end{theorem}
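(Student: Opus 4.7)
The plan is to reduce Theorem~\ref{2th1} to the corresponding Sobolev-case statement---due to Kozlov, Maz'ya and Rossmann \cite[Chapt.~3]{KozlovMazyaRossmann97} and Roitberg \cite[Chapt.~2]{Roitberg99}---via the method of interpolation with a function parameter between Hilbert spaces. The key input, which the paper postpones to Section~\ref{sec5}, is that for every $s\in\mathbb{R}$ and $\varphi\in\mathcal{M}$ one can find $\varepsilon>0$ and a function parameter $\psi=\psi_{\varphi,\varepsilon}$ such that each of the refined spaces $H^{s,\varphi,(2q)}(\Omega)$, $H^{s,\varphi,(0)}(\Omega)$ and $H^{s,\varphi}(\Gamma)$ coincides, up to equivalence of norms, with the $\psi$-interpolation space of the respective Sobolev--Roitberg couple at the regularities $s\mp\varepsilon$. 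Since this interpolation functor is exact and commutes with finite orthogonal sums, it will act termwise on the direct sum decompositions of the source and target spaces of \eqref{2f12}.

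With this in hand I would proceed in three steps. First, fix $\varepsilon>0$ and apply the classical Sobolev-scale result: the mapping \eqref{2f3} extends to bounded Fredholm operators at the regularities $s\pm\varepsilon$, with kernel $N$, range described by orthogonality \eqref{2f14} to $N^{+}$, and common index $\dim N-\dim N^{+}$; here $N$ and $N^{+}$ are the same at every regularity because they consist of smooth data only. Second, apply the interpolation functor to these two realizations to obtain the bounded extension \eqref{2f12}. Third, to recover the Fredholm property together with the precise description of kernel and range, I would invoke the abstract interpolation theorem for Fredholm operators with a common finite-dimensional kernel which is used systematically in \cite{MikhailetsMurach10, MikhailetsMurach14}: since $N$ lies in the smooth core of every space in the scale, it remains the kernel of the interpolated operator, the index is preserved, and the range is closed of finite codimension $\dim N^{+}$. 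The identification of the range with the orthogonality condition \eqref{2f14} then follows because the definition of $H^{s,\varphi,(0)}(\Omega)$ as the dual of $H^{-s,1/\varphi}(\Omega)$ with respect to $(\cdot,\cdot)_{\Omega}$, together with the analogous duality of $H^{s,\varphi}(\Gamma)$ relative to $(\cdot,\cdot)_{\Gamma}$, lets both pairings in \eqref{2f14} extend continuously to the target space tested against the smooth elements of $N^{+}$; the resulting closed subspace has the correct codimension and agrees with the range at the Sobolev endpoints, hence at the refined level.

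The main technical obstacle I foresee lies at the exceptional half-integer values $s\in E_{2q}$, where $H^{s,\varphi,(2q)}(\Omega)$ is itself defined by a secondary interpolation. At these values the argument requires that the two interpolations---the one defining the exceptional-$s$ Roitberg space and the one transferring the Sobolev result to the refined scale---commute, which ultimately reduces to the reiteration property for interpolation with a function parameter to be established in Section~\ref{sec5}. Once this is in place, the claims on boundedness, Fredholm property, and description of kernel, range and index of \eqref{2f12} follow uniformly in $(s,\varphi)$.
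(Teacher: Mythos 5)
Your proposal is correct and follows essentially the same route as the paper: reduce to the Sobolev case of Kozlov--Maz'ya--Rossmann and Roitberg at two nearby regularities, interpolate with the function parameter $\psi$ of Proposition~\ref{2pr1}, identify the interpolation spaces termwise via Propositions~\ref{2pr3} and~\ref{2pr1}, and obtain the Fredholm property, kernel, index, and range description $T(X_{\psi})=Y_{\psi}\cap T(X_{0})$ from Proposition~\ref{2pr2}. The exceptional-value issue for $s\in E_{2q}$ that you flag is handled in the paper by choosing the endpoints $s-\varepsilon$ and $s+\delta$ to be integers and citing the interpolation identity of Proposition~\ref{2pr1}, which is valid for arbitrary $\sigma$.
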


Here and below, we let $(\cdot,\cdot)_{\Omega}$ and $(\cdot,\cdot)_{\Gamma}$ denote the extension by continuity of the inner products in the Hilbert spaces $L_{2}(\Omega)$ and $L_{2}(\Gamma)$ respectively.

In connection with this theorem, we recall that a linear bounded operator $T:E_{1}\rightarrow E_{2}$ between some Banach spaces $E_{1}$ and $E_{2}$ is said to be Fredholm if its kernel $\ker T$ and co-kernel $E_{2}/T(E_{1})$ are finite-dimensional. If this operator is Fredholm, then its range $T(E_{1})$ is closed in $E_{2}$, and its index $\mathrm{ind}\,T:=\dim\ker T-\dim(E_{2}/T(E_{1}))$ is finite.

It is useful to note that, in the case of $s>2q-1/2$, the bounded and Fredholm operator \eqref{2f12} acts between the spaces
$$
\Lambda:\,H^{s,\varphi}(\Omega)\oplus
\bigoplus_{k=1}^{\varkappa}H^{s+r_{k}-1/2,\varphi}(\Gamma)\rightarrow
H^{s-2q,\varphi}(\Omega)\oplus
\bigoplus_{j=1}^{q+\varkappa}H^{s-m_{j}-1/2,\varphi}(\Gamma),
$$
which consist of distributions on $\Omega$ or $\Gamma$.

In the case where $N=\{0\}$ and $N^{+}=\{0\}$, the operator \eqref{2f12} is an isomorphism of the space $\mathcal{D}^{s,\varphi,(2q)}(\Omega,\Gamma)$ onto the space $\mathcal{E}^{s-2q,\varphi,(0)}(\Omega,\Gamma)$. Generally, this operator induces an isomorphism between some of their (closed) subspaces, which have finite co-dimension. In this connection, we consider the following decompositions of these spaces in direct sums of subspaces:
\begin{equation}\label{2f15}
\begin{gathered}
\mathcal{D}^{s,\varphi,(2q)}(\Omega,\Gamma)=
N\dotplus\biggl\{(u,v_{1},\ldots,v_{\varkappa})\in
\mathcal{D}^{s,\varphi,(2q)}(\Omega,\Gamma):\\
(u_{0},u^{(0)})_\Omega+\sum_{k=1}^{\varkappa}(v_{k},v_{k}^{(0)})_{\Gamma}=0\;\,
\mbox{for all}\;\,(u^{(0)},v_{1}^{(0)},\ldots,v_{\varkappa}^{(0)})\in N\biggr\},
\end{gathered}
\end{equation}
with $u_{0}$ being the initial component of the vector $T_{2q}u=(u_{0},u_{1},\ldots,u_{2q})$, and
\begin{equation}\label{2f16}
\mathcal{E}^{s-2q,\varphi,(0)}(\Omega,\Gamma)=
N^{+}\dotplus\Lambda\bigl(\mathcal{D}^{s,\varphi,(2q)}(\Omega,\Gamma)\bigr).
\end{equation}
Each decomposition is well defined because the subspaces have the intersection $\{0\}$ and because the finite dimension of the first subspace is equal to the co-dimension of the second. Let $P$ and $P^{+}$ denote respectively the projectors of the spaces
$\mathcal{D}^{s,\varphi,(2q)}(\Omega,\Gamma)$ and $\mathcal{E}^{s-2q,\varphi,(0)}(\Omega,\Gamma)$ onto the second term in the sums \eqref{2f15} and \eqref{2f16} parallel to the first term. These projectors do not depend, as mappings, on $s$ and $\varphi$.

\begin{theorem}\label{2th2}
For arbitrary $s\in\mathbb{R}$ and $\varphi\in\mathcal {M}$, the restriction of the mapping \eqref{2f12} to the subspace  $P(\mathcal{D}^{s,\varphi,(2q)}(\Omega,\Gamma))$ is an isomorphism
\begin{equation}\label{2f17}
\Lambda:\,P\bigl(\mathcal{D}^{s,\varphi,(2q)}(\Omega,\Gamma)\bigr)
\leftrightarrow P^{+}\bigl(\mathcal{E}^{s-2q,\varphi,(0)}(\Omega,\Gamma)\bigr).
\end{equation}
\end{theorem}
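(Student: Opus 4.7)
My plan is to deduce Theorem~\ref{2th2} directly from Theorem~\ref{2th1} by standard Fredholm-theoretic bookkeeping together with the Banach open mapping theorem. Once the decompositions \eqref{2f15} and \eqref{2f16} are shown to be topologically direct, the conclusion follows from a short injectivity/surjectivity check.

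First I would verify that both decompositions are well-defined. For \eqref{2f15}, the key observation is that $N$ meets the orthogonality subspace on the right only at zero: if $(u,v_{1},\ldots,v_{\varkappa})\in N$ lies in that subspace, then, since $u\in C^{\infty}(\overline{\Omega})$, the initial component $u_{0}$ of $T_{2q}u$ coincides with $u$, so I may take the test vector $(u^{(0)},v_{1}^{(0)},\ldots,v_{\varkappa}^{(0)})$ to be $(u,v_{1},\ldots,v_{\varkappa})$ itself, obtaining $\|u\|_{L_{2}(\Omega)}^{2}+\sum_{k=1}^{\varkappa}\|v_{k}\|_{L_{2}(\Gamma)}^{2}=0$. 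The same self-pairing trick, now based on \eqref{2f14}, gives $N^{+}\cap\Lambda(\mathcal{D}^{s,\varphi,(2q)}(\Omega,\Gamma))=\{0\}$ in \eqref{2f16}. Counting dimensions, the orthogonality subspace in \eqref{2f15} is closed of codimension $\dim N$ (the $L_{2}$-pairing on the finite-dimensional $N$ being non-degenerate by the same trick, and each defining functional being continuous via duality between $H^{s,\varphi,(0)}$ and $H^{-s,1/\varphi,(0)}$ on $\Omega$ and the analogous pairing on $\Gamma$), while $\Lambda(\mathcal{D}^{s,\varphi,(2q)}(\Omega,\Gamma))$ has codimension $\dim N^{+}$ in $\mathcal{E}^{s-2q,\varphi,(0)}(\Omega,\Gamma)$ by the Fredholm property of \eqref{2f12} together with \eqref{2f14}. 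Hence both \eqref{2f15} and \eqref{2f16} are genuine topological direct sums, and the complementary projectors $P$ and $P^{+}$ are continuous. Moreover, since $N$ and $N^{+}$ consist of smooth tuples and since the defining pairings are the same for every parameter, $P$ and $P^{+}$ do not depend on $s$ or $\varphi$ as mappings.

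Next I would restrict the bounded Fredholm operator \eqref{2f12} to the closed subspace $P(\mathcal{D}^{s,\varphi,(2q)}(\Omega,\Gamma))$. This restriction is injective, because its kernel is contained in $P(\mathcal{D}^{s,\varphi,(2q)}(\Omega,\Gamma))\cap\ker\Lambda=P(\mathcal{D}^{s,\varphi,(2q)}(\Omega,\Gamma))\cap N=\{0\}$ by the construction of $P$ and by Theorem~\ref{2th1}. Its image equals $\Lambda(\mathcal{D}^{s,\varphi,(2q)}(\Omega,\Gamma))$, since $P$ projects along $N=\ker\Lambda$, and this image coincides with $P^{+}(\mathcal{E}^{s-2q,\varphi,(0)}(\Omega,\Gamma))$ by the very definition of $P^{+}$ via \eqref{2f16}. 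Thus \eqref{2f17} is a continuous linear bijection between two closed subspaces of the ambient Hilbert spaces---closed because the range of a Fredholm operator is closed and because the kernel of a continuous projector is closed---and the Banach open mapping theorem then yields that it is a topological isomorphism. The only mildly delicate point in the whole argument is the verification of the direct-sum decompositions, which reduces entirely to the self-pairing trick above together with the dimension count furnished by Theorem~\ref{2th1}.
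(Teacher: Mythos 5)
Your proof is correct and follows essentially the same route as the paper: restrict the Fredholm operator of Theorem~\ref{2th1} to $P(\mathcal{D}^{s,\varphi,(2q)}(\Omega,\Gamma))$, check that it is a continuous bijection onto $P^{+}(\mathcal{E}^{s-2q,\varphi,(0)}(\Omega,\Gamma))$, and invoke the Banach inverse operator theorem. The extra verification that \eqref{2f15} and \eqref{2f16} are topological direct sums (via the self-pairing argument and the dimension count) is the content the paper places in the text preceding the theorem rather than in the proof itself, so no substantive difference.
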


We will discuss properties of the generalized solutions to the elliptic  boundary-value problem \eqref{2f1}, \eqref{2f2}. Beforehand, we give the definition of such solutions.

Let $\mathcal{D}^{-\infty,(2q)}(\Omega,\Gamma)$ denote the union of all the spaces $\mathcal{D}^{s,\varphi,(2q)}(\Omega)$ with $s\in\mathbb{R}$ and $\varphi\in\mathcal{M}$. Similarly, let $\mathcal{E}^{-\infty,(0)}(\Omega,\Gamma)$ denote the union of all the spaces $\mathcal{E}^{s,\varphi,(0)}(\Omega)$ with $s\in\mathbb{R}$ and $\varphi\in\mathcal{M}$. The linear spaces  $\mathcal{D}^{-\infty,(2q)}(\Omega,\Gamma)$ and $\mathcal{E}^{-\infty,(0)}(\Omega,\Gamma)$ are well defined in view of the embeddings \eqref{2f9}--\eqref{2f11} and are endowed with the topology of inductive limit.

Consider an arbitrary vector
\begin{equation}\label{2f18}
(u,v):=(u,v_{1},\ldots,v_{\varkappa})\in
\mathcal{D}^{-\infty,(2q)}(\Omega,\Gamma).
\end{equation}
Since $(u,v)\in\mathcal{D}^{s,\varphi,(2q)}(\Omega)$ for some $s\in\mathbb{R}$ and $\varphi\in\mathcal{M}$, its image \eqref{2f13}
is defined by the formula $\Lambda(u,v)=(f,g)$ by means of the bounded operator \eqref{2f12}. The image $(f,g)$ does not depend on $s\in\mathbb{R}$ and $\varphi\in\mathcal{M}$. The vector \eqref{2f18} is said to be a generalized solution (in the sense of Ya.~A.~Roitberg) of the boundary-value problem \eqref{2f1}, \eqref{2f2} with the right-hand sides \eqref{2f13}. Such a solution satisfies the following a priori estimate.

\begin{theorem}\label{2th3}
Let $s\in\mathbb{R}$, $\varphi\in\mathcal{M}$, and a real number $\sigma>0$. Then there exists a number $c>0$ such that
\begin{equation}\label{2f19}
\begin{gathered}
\|(u,v)\|_{\mathcal{D}^{s,\varphi,(2q)}(\Omega,\Gamma)}\leq c\,\bigl(\,\|\Lambda(u,v)\|_{\mathcal{E}^{s-2q,\varphi,(0)}(\Omega,\Gamma)}+
\|(u,v)\|_{\mathcal{D}^{s-\sigma,\varphi,(2q)}(\Omega,\Gamma)}\bigl)
\end{gathered}
\end{equation}
for an arbitrary vector $(u,v)\in\mathcal{D}^{s,\varphi,(2q)}(\Omega,\Gamma)$. Here, the number $c=c(s,\varphi)$ does not depend on~$(u,v)$.
\end{theorem}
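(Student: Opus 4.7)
The plan is to derive \eqref{2f19} directly from Theorem~\ref{2th2} together with the finite-dimensionality of the kernel $N$. For an arbitrary $(u,v)\in\mathcal{D}^{s,\varphi,(2q)}(\Omega,\Gamma)$, I would first split it using the decomposition \eqref{2f15} as $(u,v)=(I-P)(u,v)+P(u,v)$, with $(I-P)(u,v)\in N$. Since $N=\ker\Lambda$ by Theorem~\ref{2th1}, it follows that $\Lambda(u,v)=\Lambda P(u,v)$, and the isomorphism \eqref{2f17} supplies a constant $C_{1}=C_{1}(s,\varphi)>0$ with
$$
\|P(u,v)\|_{\mathcal{D}^{s,\varphi,(2q)}(\Omega,\Gamma)}\leq C_{1}\,\|\Lambda(u,v)\|_{\mathcal{E}^{s-2q,\varphi,(0)}(\Omega,\Gamma)}.
$$

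Next, I would estimate the kernel component $(I-P)(u,v)\in N$. The finite-dimensionality of $N\subset C^{\infty}(\overline{\Omega})\times(C^{\infty}(\Gamma))^{\varkappa}$ guarantees that the norms inherited from $\mathcal{D}^{s,\varphi,(2q)}(\Omega,\Gamma)$ and from $\mathcal{D}^{s-\sigma,\varphi,(2q)}(\Omega,\Gamma)$ are equivalent on $N$; combined with the boundedness of the projector $I-P$ on $\mathcal{D}^{s-\sigma,\varphi,(2q)}(\Omega,\Gamma)$, which holds because \eqref{2f15} at level $s-\sigma$ again exhibits $N$ as a closed, finite-dimensional direct summand, this yields a constant $C_{2}=C_{2}(s,\varphi,\sigma)>0$ with
$$
\|(I-P)(u,v)\|_{\mathcal{D}^{s,\varphi,(2q)}(\Omega,\Gamma)}\leq C_{2}\,\|(u,v)\|_{\mathcal{D}^{s-\sigma,\varphi,(2q)}(\Omega,\Gamma)}.
$$
Adding the two bounds via the triangle inequality then yields \eqref{2f19} with $c:=C_{1}+C_{2}$.

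The only subtle point is the compatibility of $P$ across the two scales: I need $(I-P)(u,v)$ computed in the smaller space to agree with the result of applying the corresponding projector on the larger space. This is built into the construction, since the text explicitly notes that $P$ is intrinsically the same mapping for all admissible $(s,\varphi)$, and since $\mathcal{D}^{s,\varphi,(2q)}(\Omega,\Gamma)$ embeds continuously into $\mathcal{D}^{s-\sigma,\varphi,(2q)}(\Omega,\Gamma)$ by \eqref{2f9} and \eqref{2f11}. A Peetre-type argument based on the compactness of this embedding (also available via \eqref{2f9}--\eqref{2f11}) would give \eqref{2f19} as well, but is less direct than the projector route since the explicit bounded inverse from Theorem~\ref{2th2} is already in hand.
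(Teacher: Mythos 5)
Your proposal is correct and follows essentially the same route as the paper's proof: split $(u,v)$ via the projector $P$ from \eqref{2f15}, bound $\|P(u,v)\|$ through the bounded inverse of the isomorphism \eqref{2f17} using $\Lambda P(u,v)=\Lambda(u,v)$, and bound the kernel component via norm equivalence on the finite-dimensional space $N$ together with the boundedness of the projector on $\mathcal{D}^{s-\sigma,\varphi,(2q)}(\Omega,\Gamma)$. The compatibility of $P$ across the two scales, which you rightly flag, is exactly what the paper secures by noting that the projectors do not depend, as mappings, on $s$ and $\varphi$.
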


Let us now focus our attention on the regularity properties of the generalized solutions to the elliptic problem under investigation. Let $V$ be an arbitrary open subset of $\mathbb{R}^{n}$. We put
$\Omega_{0}:=\Omega\cap V\neq\varnothing$ and $\Gamma_{0}:=\Gamma\cap V$, the $\Gamma_{0}=\varnothing$ case being possible. Introduce a local analogs of the spaces $\mathcal{D}^{s,\varphi,(2q)}(\Omega)$ and $\mathcal{E}^{s,\varphi,(0)}(\Omega)$, with $s\in\mathbb{R}$ and $\varphi\in\mathcal{M}$. We let $\mathcal{D}^{s,\varphi,(2q)}_{\mathrm{loc}}(\Omega_{0},\Gamma_{0})$ denote the linear space of all vectors \eqref{2f18} such that
$\chi(u,v)\in\mathcal{D}^{s,\varphi,(2q)}(\Omega)$ for an arbitrary function $\chi\in C^{\infty}(\overline{\Omega})$ with $\mathrm{supp}\,\chi\subset\Omega_0\cup\Gamma_{0}$. Similarly, we let
$\mathcal{E}^{s,\varphi,(0)}_{\mathrm{loc}}(\Omega_{0},\Gamma_{0})$
denote the linear space of all vectors $(f,g)\in\mathcal{E}^{-\infty,(0)}(\Omega,\Gamma)$
such that
$\chi(f,g)\in\mathcal{E}^{s,\varphi,(0)}(\Omega)$ for an arbitrary function $\chi$ indicated just above. Of course, the multiplication of the vectors by the scalar function $\chi$ is understood to be component-vise,  the products $\chi u$ and $\chi f$ are well defined by closure, and the product $\chi h$, where $h$ is a distribution on $\Gamma$, means $(\chi\!\upharpoonright\Gamma\!)h$. We introduce topologies in the spaces  $\mathcal{D}^{s,\varphi,(2q)}_{\mathrm{loc}}(\Omega_{0},\Gamma_{0})$ and
$\mathcal{E}^{s,\varphi,(0)}_{\mathrm{loc}}(\Omega_{0},\Gamma_{0})$
by means of the seminorms $(u,v)\mapsto\|\chi(u,v)\|_{\mathcal{D}^{s,\varphi,(2q)}(\Omega)}$ and
$(f,g)\mapsto\|\chi(f,g)\|_{\mathcal{E}^{s,\varphi,(0)}(\Omega)}$ respectively, were $\chi$ is an arbitrary function from the definitions of these spaces.

\begin{theorem}\label{2th4}
Let a vector $(u,v)\in\mathcal{D}^{-\infty,(2q)}(\Omega,\Gamma)$ be a generalized solution to the elliptic problem \eqref{2f1}, \eqref{2f2} whose right-hand sides satisfy the condition  $(f,g)\in
\mathcal{E}^{s-2q,\varphi,(0)}_{\mathrm{loc}}(\Omega_{0},\Gamma_{0})$ for some parameters $s\in\mathbb{R}$ and $\varphi\in \mathcal{M}$. Then $(u,v)\in\mathcal{D}^{s,\varphi,(2q)}_{\mathrm{loc}}(\Omega_{0},\Gamma_{0})$.
\end{theorem}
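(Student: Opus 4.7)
The plan is to reduce the local regularity assertion to a finite sequence of global regularity assertions for truncated solutions, by iterating a standard cutoff-and-commutator argument driven by the a priori estimate of Theorem~\ref{2th3} and the Fredholm structure supplied by Theorem~\ref{2th1}. By the definition of $\mathcal{D}^{s,\varphi,(2q)}_{\mathrm{loc}}(\Omega_{0},\Gamma_{0})$, it suffices to prove $\chi(u,v)\in\mathcal{D}^{s,\varphi,(2q)}(\Omega,\Gamma)$ for every $\chi\in C^{\infty}(\overline{\Omega})$ with $\mathrm{supp}\,\chi\subset\Omega_{0}\cup\Gamma_{0}$. Fix such a $\chi=:\chi_{0}$ and pick a chain $\chi_{1},\ldots,\chi_{N}\in C^{\infty}(\overline{\Omega})$ with $\mathrm{supp}\,\chi_{j}\subset\Omega_{0}\cup\Gamma_{0}$ and $\chi_{j+1}\equiv1$ on a neighbourhood of $\mathrm{supp}\,\chi_{j}$. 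Since $(u,v)\in\mathcal{D}^{-\infty,(2q)}(\Omega,\Gamma)$, the embeddings in \eqref{2f9}--\eqref{2f11} allow us to pick $s_{0}\leq s$ with $s_{0}+N\geq s$ such that $(u,v)\in\mathcal{D}^{s_{0},\varphi,(2q)}(\Omega,\Gamma)$ for the prescribed $\varphi$.

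The main computation is the commutator identity
\begin{equation*}
\Lambda(\chi_{j}(u,v))=\chi_{j}\Lambda(u,v)+\Lambda'_{j}(u,v),
\end{equation*}
in which $\Lambda'_{j}$ is assembled from the commutators $[A,\chi_{j}]$, $[B_{\ell},\chi_{j}]$, and $[C_{\ell,k},\chi_{j}]$ applied to the corresponding components. Each of these commutators is a differential operator of order strictly less by one than the corresponding piece of $\Lambda$, and its coefficients are supported in $\mathrm{supp}\,\chi_{j}$; consequently $\Lambda'_{j}(u,v)=\Lambda'_{j}(\chi_{j+1}(u,v))$, and $\Lambda'_{j}$ extends to a bounded operator $\mathcal{D}^{t,\varphi,(2q)}(\Omega,\Gamma)\to\mathcal{E}^{t-2q+1,\varphi,(0)}(\Omega,\Gamma)$ for every $t\in\mathbb{R}$. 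The uniform gain of one derivative is precisely what makes the bootstrap advance.

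The iteration now proceeds by induction on $j$: one shows $\chi_{j+1}(u,v)\in\mathcal{D}^{\sigma_{j},\varphi,(2q)}(\Omega,\Gamma)$ with $\sigma_{j}:=\min(s,s_{0}+j)$. At step $j$ the local hypothesis on $(f,g)$ gives $\chi_{j}\Lambda(u,v)\in\mathcal{E}^{s-2q,\varphi,(0)}(\Omega,\Gamma)$, whereas the commutator identity together with the inductive assumption places $\Lambda'_{j}(u,v)$ into $\mathcal{E}^{\sigma_{j}-2q+1,\varphi,(0)}(\Omega,\Gamma)$. Adding these, $\Lambda(\chi_{j}(u,v))\in\mathcal{E}^{\sigma_{j+1}-2q,\varphi,(0)}(\Omega,\Gamma)$. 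Since Theorem~\ref{2th1} characterizes the range of $\Lambda$ by the same orthogonality condition \eqref{2f14} on every refined scale and since the finite-dimensional kernel $N$ consists of smooth elements, we may lift $\Lambda(\chi_{j}(u,v))$ to a preimage in the target regularity space and conclude $\chi_{j}(u,v)\in\mathcal{D}^{\sigma_{j+1},\varphi,(2q)}(\Omega,\Gamma)$; the accompanying quantitative control is supplied by Theorem~\ref{2th3}. After $N$ iterations one lands at $\chi_{0}(u,v)\in\mathcal{D}^{s,\varphi,(2q)}(\Omega,\Gamma)$, as required.

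I expect the principal obstacle to lie not in the abstract scheme but in verifying that the auxiliary mapping properties used at each step hold uniformly on the refined scale with arbitrary $\varphi\in\mathcal{M}$: boundedness of multiplication by a smooth cutoff on the Roitberg-type spaces $H^{s,\varphi,(0)}(\Omega)$ and $H^{s,\varphi,(2q)}(\Omega)$ as well as on $H^{s,\varphi}(\Gamma)$; boundedness of the commutator operator $\Lambda'_{j}$ with the claimed gain of one derivative; and the existence of a preimage of $\Lambda(\chi_{j}(u,v))$ in the refined target. All three are classical for $\varphi\equiv1$, and for arbitrary $\varphi$ they should be obtained from their Sobolev counterparts via the interpolation with function parameter developed in Section~\ref{sec5}. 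A further technical subtlety is the careful bookkeeping of the additional boundary unknowns $v_{k}$, since the commutators with $\chi_{j}\!\upharpoonright\!\Gamma$ must be matched against the tangent operators $C_{\ell,k}$ and against the refined regularity indices on the boundary factors of $\mathcal{D}^{\cdot,\varphi,(2q)}$ and $\mathcal{E}^{\cdot,\varphi,(0)}$.
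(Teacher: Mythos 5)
Your proposal is correct and follows essentially the same route as the paper: a cutoff/commutator bootstrap gaining one order per step, combined with the range characterization of Theorem~\ref{2th1} and the smoothness of the kernel $N$ to lift $\Lambda(\chi(u,v))$ back to a regular preimage (the paper isolates this lifting as a separate ``global regularity'' case and then iterates an implication in a parameter $p$ with two nested cutoffs $\chi,\eta$ per step, exactly as your chain $\chi_{j}$ does, citing the boundedness of multiplication by smooth cutoffs and of the lower-order commutator operator on the refined scales from the Mikhailets--Murach monograph). The only blemish is a harmless indexing slip: with your convention the regularity attached to $\chi_{j}$ should be $\min(s,s_{0}+N-j)$ rather than $\min(s,s_{0}+j)$, so that after $N$ steps one indeed reaches $s$ at $\chi_{0}$.
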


We can see that the solution inherits the (supplementary) regularity $\varphi$ of the right-hand sides of the elliptic problem.

In the case where $\Omega_{0}=\Omega$ and $\Gamma_{0}=\Gamma$, we have  the equalities $\mathcal{D}^{s,\varphi,(2q)}_{\mathrm{loc}}(\Omega_{0},\Gamma_{0})=
\mathcal{D}^{s,\varphi,(2q)}(\Omega,\Gamma)$ and $\mathcal{E}^{s-2q,\varphi,(0)}_{\mathrm{loc}}(\Omega_{0},\Gamma_{0})=
\mathcal{E}^{s-2q,\varphi,(0)}(\Omega,\Gamma)$. Therefore, Theorem~\ref{2f4} says about the global regularity in this case, i.e., the regularity on the whole closed domain~$\overline{\Omega}$.

If $\Gamma_{0}=\varnothing$, then this theorem states that the local regularity of the component $u$ of the generalized solution increases in neighbourhoods of internal points of the domain~$\Omega$.

In the Sobolev case of $\varphi(t)\equiv1$, Theorem 1--4 are proved in monographs \cite[Sect. 3.2 and 3.4]{KozlovMazyaRossmann97} (for an arbitrary integer $s$) and \cite[Sect. 2.4]{Roitberg99} (for arbitrary real $s$ and general elliptic systems). For arbitrary $\varphi\in\mathcal{M}$ and real $s>2q$, Theorems 1 and 2 with analogs of Theorems 3 and 4 are proved in \cite{Chepurukhina14Coll2}.

\section{Auxiliary results}\label{sec5}

The refined scales of spaces introduced in Section~\ref{sec3} possess an important interpolation property, which will play a key role in the proof of Theorem~\ref{2th1}. Namely, each of these spaces can be obtained by the interpolation, with a function parameter, of an appropriate couple of Hilbert spaces corresponding to the Sobolev case of $\varphi(t)\equiv1$.
Therefore, we will recall the definition of this interpolation in the case of arbitrary Hilbert spaces and discuss some of its properties that are necessary for us. Among them, we will formulate the mentioned interpolation property for Hilbert spaces appearing in \eqref{2f12}.

The method of interpolation with a function parameter of Hilbert spaces was introduced by C.~Foia\c{s} and J.-L.~Lions in \cite[p.~278]{FoiasLions61}. We follow monographs \cite[Sect.~1.1]{MikhailetsMurach10, MikhailetsMurach14}, which systematically expound this method. It is sufficient for our purposes to restrict ourselves to separable Hilbert spaces.

Let $X:=[X_{0},X_{1}]$ be a given ordered couple of separable complex Hilbert spaces $X_{0}$ and $X_{1}$ such that the continuous and dense embedding $X_{1}\hookrightarrow X_{0}$ holds. This couple is called admissible. For $X$, there exists a self-adjoint positive-definite operator $J$ on $X_{0}$ which has the domain $X_{1}$ and satisfies the condition $\|Jw\|_{X_{0}}=\|w\|_{X_{1}}$ for all $w\in X_{1}$. This operator is
uniquely determined by the couple $X$ and is called a generating operator for~$X$. It sets an isometric isomorphism $J:X_{1}\leftrightarrow X_{0}$.

Let $\mathcal{B}$ denote the set of all Borel measurable functions
$\psi:(0,\infty)\rightarrow(0,\infty)$ such that $\psi$ is bounded on each compact interval $[a,b]$, with $0<a<b<\infty$, and that $1/\psi$ is bounded on every set $[r,\infty)$, with $r>0$.

For arbitrary $\psi\in\mathcal{B}$, we consider the (generally, unbounded) operator $\psi(J)$, which is defined on $X_{0}$ as the Borel function $\psi$ of $J$. Let $[X_{0},X_{1}]_{\psi}$ or, simply, $X_{\psi}$ denote the domain of the operator $\psi(J)$ endowed with the inner product
$(u_{1},u_{2})_{X_{\psi}}:=(\psi(J)u_{1},\psi(J)u_{2})_{X_{0}}$ and the
corresponding norm $\|u\|_{X_{\psi}}=\|\psi(J)u\|_{X_{0}}$. The space $X_{\psi}$ is Hilbert and separable.

A function $\psi\in\mathcal{B}$ is called an interpolation parameter if the
following condition is fulfilled for all admissible couples $X=[X_{0},X_{1}]$ and
$Y=[Y_{0},Y_{1}]$ of Hilbert spaces and for an arbitrary linear mapping $T$ given on
$X_{0}$: if the restriction of $T$ to $X_{j}$ is a bounded operator
$T:X_{j}\rightarrow Y_{j}$ for each $j\in\{0,1\}$, then the restriction of $T$ to
$X_{\psi}$ is also a bounded operator $T:X_{\psi}\rightarrow Y_{\psi}$. In this case
we say that $X_{\psi}$ is obtained by the interpolation with the function parameter $\psi$ of the couple $X$.

The function $\psi\in\mathcal{B}$ is an interpolation parameter if and only if $\psi$ is pseudoconcave on a neighbourhood of infinity, i.e.
$\psi(t)\asymp\psi_{1}(t)$ with $t\gg1$ for a certain positive concave
function~$\psi_{1}(t)$. (As usual, the designation $\psi\asymp\psi_{1}$ means that both the functions $\psi/\psi_{1}$ and $\psi_{1}/\psi$ are bounded on the indicated set). This fundamental fact follows from J.~Peetre's \cite{Peetre68} description of all interpolation functions of positive order. Specifically, the power function $\psi(t)\equiv t^{s}$ is an interpolation parameter if and only if $0\leq s\leq1$. In this case,  the interpolation space $X_{\psi}$ is denoted by $X_{s}$.

We need the following interpolation properties of the refined scales \cite[Theorems 1.14, 2.2, 3.10, and 4.22]{MikhailetsMurach10, MikhailetsMurach14}.

\begin{proposition}\label{2pr1}
Let a function $\varphi\in\mathcal{M}$ and positive real numbers $\varepsilon,\delta$ be given. Define a function $\psi\in\mathcal{B}$ by the formulas $\psi(t):=
t^{\varepsilon/(\varepsilon+\delta)}\varphi(t^{1/(\varepsilon+\delta)})$ for $t>1$, and $\psi(t):=\varphi(1)$ if $0<t<1$. Then $\psi$ is an interpolation parameter, and, for arbitrary $\sigma\in\mathbb{R}$, we have  \begin{gather*}
\bigl[H^{\sigma-\varepsilon,(2q)}(\Omega),
H^{\sigma+\delta,(2q)}(\Omega)\bigr]_{\psi}
=H^{\sigma,\varphi,(2q)}(\Omega),\\
\bigl[H^{\sigma-\varepsilon,(0)}(\Omega),
H^{\sigma+\delta,(0)}(\Omega)\bigr]_{\psi}
=H^{\sigma,\varphi,(0)}(\Omega),\\
\bigl[H^{\sigma-\varepsilon}(\Gamma),
H^{\sigma+\delta}(\Gamma)\bigr]_{\psi}
=H^{\sigma,\varphi}(\Gamma)
\end{gather*}
up to equivalence of norms.
\end{proposition}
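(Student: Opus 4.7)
The plan is to reduce the three interpolation equalities to the corresponding statement on $H^{s,\varphi}(\mathbb{R}^{n})$ and then transfer it to $\Gamma$, to $\Omega$, and to the two-sided modifications via the constructions from Section~\ref{sec3}.

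First I would verify that $\psi$ is an interpolation parameter by means of the Peetre pseudoconcavity criterion cited in the excerpt. Writing $\alpha:=\varepsilon/(\varepsilon+\delta)\in(0,1)$ and $\beta:=1/(\varepsilon+\delta)$, one has $\psi(t)=t^{\alpha}\varphi(t^{\beta})$ for $t>1$. Since $\varphi\in\mathcal{M}$ is slowly varying, so is $t\mapsto\varphi(t^{\beta})$, and hence $\psi$ is regularly varying of positive index $\alpha<1$; the Karamata representation theorem then produces a positive concave function equivalent to $\psi$ in a neighbourhood of infinity, which is exactly the required pseudoconcavity.

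Next I would establish the key model identity on $\mathbb{R}^{n}$. For the admissible couple $[H^{\sigma-\varepsilon}(\mathbb{R}^{n}),H^{\sigma+\delta}(\mathbb{R}^{n})]$ the generating operator is, via the Fourier transform, the multiplier $\langle\xi\rangle^{\varepsilon+\delta}$, so $\psi$ applied to it is multiplication by $\psi(\langle\xi\rangle^{\varepsilon+\delta})=\langle\xi\rangle^{\varepsilon}\varphi(\langle\xi\rangle)$ (this is precisely why $\psi$ is chosen in this form, since $\langle\xi\rangle\geq1$). A direct norm computation then identifies the interpolation norm with $\|\cdot\|_{H^{\sigma,\varphi}(\mathbb{R}^{n})}$. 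To descend to $\Gamma$ I would use the local-chart/partition-of-unity definition: the map $h\mapsto((\chi_{j}h)\circ\alpha_{j})_{j}$ realises $H^{s,\varphi}(\Gamma)$ as a complemented subspace of $(H^{s,\varphi}(\mathbb{R}^{n-1}))^{\lambda}$ uniformly in $s$, and the interpolation functor commutes with finite direct sums and with passage to complemented subspaces. For $H^{s,\varphi}(\Omega)$ I would analogously use restriction from $\mathbb{R}^{n}$ together with a common bounded extension operator on the full Sobolev scale, presenting $H^{s,\varphi}(\Omega)$ as a compatible quotient of $H^{s,\varphi}(\mathbb{R}^{n})$.

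The modified scales require a second layer. The equality for $H^{s,\varphi,(0)}(\Omega)$ with $s<0$ follows from the previous step combined with the fact that interpolation with an interpolation parameter is compatible with duality in admissible Hilbert couples, together with the definition of $H^{s,\varphi,(0)}(\Omega)$ as the dual of $H^{-s,1/\varphi}(\Omega)$. Finally, for $H^{s,\varphi,(2q)}(\Omega)$ with $s\notin E_{2q}$ I would exploit the isometric embedding $T_{2q}$ into $\Pi_{s,\varphi,(2q)}(\Omega,\Gamma)$ recalled in Section~\ref{sec3}: it realises the space as a closed complemented subspace of a direct sum to which the already established identities apply component-wise. For the half-integer parameters $s\in E_{2q}$, where $H^{s,\varphi,(2q)}(\Omega)$ is itself defined by interpolation between two neighbouring non-half-integer points, I would invoke the reiteration theorem for interpolation with a function parameter. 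I expect reconciling the two interpolation recipes at $s\in E_{2q}$ to be the main obstacle, since one must compute the iterated function parameter explicitly and check that reiteration returns precisely $\psi$; once this is in hand, the remaining compatibility with restriction, duality, and finite direct sums is routine.
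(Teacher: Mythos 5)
The paper does not actually prove Proposition~\ref{2pr1}: it is quoted from the monograph of Mikhailets and Murach (Theorems 1.14, 2.2, 3.10 and 4.22 there), so there is no in-paper argument to compare against. Your outline reproduces the strategy of the cited proofs: pseudoconcavity of $\psi$ via regular variation of index $\varepsilon/(\varepsilon+\delta)\in(0,1)$, the exact computation $\psi(\langle\xi\rangle^{\varepsilon+\delta})=\langle\xi\rangle^{\varepsilon}\varphi(\langle\xi\rangle)$ for the generating operator of the Sobolev couple over $\mathbb{R}^{n}$, and the transfer to $\Gamma$ and $\Omega$ by retraction--coretraction arguments are all correct and are essentially how the first and third identities are obtained in the source.

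Two points in your second layer are thinner than you acknowledge. For the $(0)$-scale, duality alone does not settle the case $\sigma-\varepsilon<0\leq\sigma+\delta$, in which the couple mixes a space defined by duality with one defined by restriction; the cited proof instead realizes every $H^{s,\varphi,(0)}(\Omega)$ with $s<0$ compatibly inside $H^{s,\varphi}(\mathbb{R}^{n})$ via the extension-by-zero isometry recalled in Section~\ref{sec3} and interpolates there. For the $(2q)$-scale, presenting $H^{s,\varphi,(2q)}(\Omega)$ as a complemented subspace of $\Pi_{s,\varphi,(2q)}(\Omega,\Gamma)$ requires a single projector bounded on both endpoint spaces of the couple simultaneously; its existence is a genuine construction (it is where the exceptional set $E_{2q}$ originates), not a formal consequence of $T_{2q}$ being isometric with closed range at each fixed $s$. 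You correctly flag the reiteration step at $s\in E_{2q}$ as the remaining obstacle. None of this is a wrong turn, but as written the proposal is an accurate road map through the cited proofs rather than a self-contained argument.
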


We will also use the following two general properties of the interpolation \cite[Theorems 1.7 and 1.5]{MikhailetsMurach10, MikhailetsMurach14}.

\begin{proposition}\label{2pr2}
Let $X=[X_{0},X_{1}]$ and $Y=[Y_{0},Y_{1}]$ be admissible couples of Hilbert spaces, and let a linear mapping $T$ be given on $X_{0}$. Suppose that we have the bounded and Fredholm operators $T:X_{j}\rightarrow Y_{j}$, with $j=0,\,1$, that possess the common kernel and the common index. Then, for an arbitrary interpolation parameter $\psi\in\mathcal{B}$, the bounded operator $T:X_{\psi}\rightarrow Y_{\psi}$ is Fredholm, has the same kernel and the same index, and, moreover, its range $T(X_{\psi})=Y_{\psi}\cap T(X_{0})$.
\end{proposition}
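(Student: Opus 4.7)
I would proceed by reducing to the case of an isomorphism at both levels. The boundedness of $T:X_{\psi}\to Y_{\psi}$ is immediate from the definition of $\psi$ as an interpolation parameter applied to the hypothesized bounded maps $T:X_{j}\to Y_{j}$. For the kernel, the embeddings $X_{1}\hookrightarrow X_{\psi}\hookrightarrow X_{0}$ (valid because $\psi$ is pseudoconcave near infinity and bounded on compact sets) yield $\ker(T|_{X_{\psi}})\subseteq\ker(T|_{X_{0}})=N$ from the second embedding, while $N=\ker(T|_{X_{1}})\subseteq X_{1}\subseteq X_{\psi}$ from the first, so the kernel equals~$N$.

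The range and index require more work. Let $d:=\mathrm{codim}\,T(X_{j})$, independent of $j\in\{0,1\}$ by the common-kernel and common-index hypotheses. A preliminary step is the identity $T(X_{1})=Y_{1}\cap T(X_{0})$: the natural map $Y_{1}/T(X_{1})\to Y_{0}/T(X_{0})$ induced by inclusion is a linear map between $d$-dimensional spaces, and is surjective because $Y_{1}$ is dense in $Y_{0}$ and the target is finite-dimensional; hence it is an isomorphism, so its kernel $(Y_{1}\cap T(X_{0}))/T(X_{1})$ vanishes. Pick $y_{1},\ldots,y_{d}\in Y_{1}$ whose classes form a basis of $Y_{1}/T(X_{1})$; by the isomorphism they also form a basis of $Y_{0}/T(X_{0})$. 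The augmented operator
\[
\widetilde{T}:X_{j}\oplus\mathbb{C}^{d}\to Y_{j},\qquad\widetilde{T}(x,c):=Tx+\sum_{i=1}^{d}c_{i}y_{i},
\]
is then bounded and surjective on both levels, with common kernel $N\oplus\{0\}$; by the open mapping theorem it induces Banach-space isomorphisms $\overline{\widetilde{T}}:(X_{j}\oplus\mathbb{C}^{d})/N\to Y_{j}$ for $j=0,1$.

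I would then apply the interpolation-parameter property of $\psi$ to $\overline{\widetilde{T}}$ and to its inverse separately, after identifying
\[
\bigl[(X_{0}\oplus\mathbb{C}^{d})/N,(X_{1}\oplus\mathbb{C}^{d})/N\bigr]_{\psi}=(X_{\psi}\oplus\mathbb{C}^{d})/N
\]
via the standard compatibility of the Foia\c{s}--Lions method with direct sums by a fixed finite-dimensional Hilbert space and with quotients by a common finite-dimensional subspace. This shows $\overline{\widetilde{T}}$ is an isomorphism at the $\psi$-level. Unwinding yields that $T:X_{\psi}\to Y_{\psi}$ is Fredholm, with kernel $N$ and with closed range of codimension $d$ in $Y_{\psi}$, complemented by $\mathrm{span}(y_{1},\ldots,y_{d})$ and equal to $Y_{\psi}\cap T(X_{0})$; its index equals $\dim N-d$, matching the common value at the $j=0,1$ levels.

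\textbf{Main obstacle.} The principal technical point is the compatibility of the Foia\c{s}--Lions interpolation with the two algebraic operations used above: forming a direct sum with a fixed finite-dimensional space, and passing to the quotient by a common finite-dimensional subspace. These are lemmas of the general theory, derived by writing the generating operator of the modified couple as a bounded perturbation of an obvious direct sum of generating operators and invoking the spectral calculus. A secondary subtlety, already addressed above, is the identity $T(X_{1})=Y_{1}\cap T(X_{0})$, which is the operator-theoretic content of the common-kernel/common-index hypothesis and which unlocks the surjectivity of $\widetilde{T}$ on both levels simultaneously.
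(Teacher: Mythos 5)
The paper does not prove Proposition~2; it is quoted from the monograph of Mikhailets and Murach (Theorems~1.5 and~1.7 there). Your argument is correct and is essentially the proof given in that cited source: the identity $T(X_{1})=Y_{1}\cap T(X_{0})$ obtained from the density of $Y_{1}$ in $Y_{0}$ and the equality of defect numbers, followed by reduction to an isomorphism via augmentation by a finite-dimensional complement of the range and passage to the quotient by the common kernel, with the compatibility of the Foia\c{s}--Lions interpolation with finite direct sums and complemented subspaces supplying the remaining technical lemmas.
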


\begin{proposition}\label{2pr3}
Let $\bigl[X_{0}^{(j)},X_{1}^{(j)}\bigr]$, with $j=1,\ldots,r$, be a finite collection of admissible couples of Hilbert spaces. Then, for every function $\psi\in\mathcal{B}$, we have
$$
\biggl[\,\bigoplus_{j=1}^{r}X_{0}^{(j)},
\,\bigoplus_{j=1}^{r}X_{1}^{(j)}\biggr]_{\psi}=\,
\bigoplus_{j=1}^{r}\bigl[X_{0}^{(j)},X_{1}^{(j)}\bigr]_{\psi}
$$
with equality of norms.
\end{proposition}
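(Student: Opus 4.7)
The plan is to reduce everything to the spectral calculus of the generating operators. For each admissible couple $\bigl[X_{0}^{(j)},X_{1}^{(j)}\bigr]$ let $J_{j}$ denote its generating operator: a self-adjoint, positive-definite operator on $X_{0}^{(j)}$ with domain $X_{1}^{(j)}$ such that $\|J_{j}u\|_{X_{0}^{(j)}}=\|u\|_{X_{1}^{(j)}}$ for all $u\in X_{1}^{(j)}$. I would then form the Hilbertian direct sums $X_{0}:=\bigoplus_{j=1}^{r}X_{0}^{(j)}$ and $X_{1}:=\bigoplus_{j=1}^{r}X_{1}^{(j)}$, both equipped with the natural inner products. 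Since each embedding $X_{1}^{(j)}\hookrightarrow X_{0}^{(j)}$ is continuous and dense, so is $X_{1}\hookrightarrow X_{0}$; hence $[X_{0},X_{1}]$ is admissible and has its own generating operator $J$.

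The central claim to verify is that $J$ is given by the diagonal operator $J(u_{1},\ldots,u_{r}):=(J_{1}u_{1},\ldots,J_{r}u_{r})$, defined on its natural domain $\bigoplus_{j=1}^{r}X_{1}^{(j)}=X_{1}$. This diagonal operator is clearly self-adjoint and positive-definite on $X_{0}$ (being an orthogonal direct sum of such operators), has domain $X_{1}$, and satisfies
\begin{equation*}
\|J(u_{1},\ldots,u_{r})\|_{X_{0}}^{2}=\sum_{j=1}^{r}\|J_{j}u_{j}\|_{X_{0}^{(j)}}^{2}
=\sum_{j=1}^{r}\|u_{j}\|_{X_{1}^{(j)}}^{2}
=\|(u_{1},\ldots,u_{r})\|_{X_{1}}^{2}.
\end{equation*}
By the uniqueness of the generating operator for an admissible couple, this diagonal operator must coincide with $J$.

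With $J=\bigoplus_{j}J_{j}$ identified, I would invoke the Borel functional calculus for self-adjoint operators. Because the spectral projections of a diagonal self-adjoint operator split accordingly, for any Borel function $\psi\in\mathcal{B}$ one has $\psi(J)=\bigoplus_{j=1}^{r}\psi(J_{j})$, with domain equal to $\bigoplus_{j=1}^{r}\mathrm{Dom}\,\psi(J_{j})$. Thus a vector $(u_{1},\ldots,u_{r})\in X_{0}$ lies in $X_{\psi}=\mathrm{Dom}\,\psi(J)$ if and only if each $u_{j}$ lies in $\mathrm{Dom}\,\psi(J_{j})=\bigl[X_{0}^{(j)},X_{1}^{(j)}\bigr]_{\psi}$, and
\begin{equation*}
\|(u_{1},\ldots,u_{r})\|_{X_{\psi}}^{2}
=\|\psi(J)(u_{1},\ldots,u_{r})\|_{X_{0}}^{2}
=\sum_{j=1}^{r}\|\psi(J_{j})u_{j}\|_{X_{0}^{(j)}}^{2}
=\sum_{j=1}^{r}\|u_{j}\|_{[X_{0}^{(j)},X_{1}^{(j)}]_{\psi}}^{2}.
\end{equation*}
This is exactly the asserted identity, with equality (not merely equivalence) of norms.

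The only step where care is required is the identification of $\psi(J)$ with the diagonal operator $\bigoplus_{j}\psi(J_{j})$: this is a general fact about Borel functions of orthogonal direct sums of self-adjoint operators and follows at once from the direct-sum decomposition of the joint spectral measure, but it is the part that has to be stated explicitly. Everything else is bookkeeping with norms, and the fact that $\psi\in\mathcal{B}$ (rather than being an interpolation parameter) is irrelevant here, since the statement is a purely functional-calculus identity and does not require any interpolation property of $\psi$.
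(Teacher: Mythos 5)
Your argument is correct and complete: identifying the generating operator of the direct-sum couple as the block-diagonal operator $\bigoplus_{j}J_{j}$ via the uniqueness of generating operators, and then splitting the Borel functional calculus across the orthogonal summands, is exactly the standard proof of this fact (the paper itself only cites it from the Mikhailets--Murach monograph, where the argument runs the same way). The one point you rightly flag --- that $\psi(J)=\bigoplus_{j}\psi(J_{j})$ with domain $\bigoplus_{j}\mathrm{Dom}\,\psi(J_{j})$, the summability condition being automatic for finitely many summands --- is indeed the only step needing explicit justification, and your observation that no interpolation property of $\psi$ is used is also correct.
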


\section{Proofs of the main results}\label{sec6}

Let us prove Theorems \ref{2th1}--\ref{2th4}.

\begin{proof}[Proof of Theorem $\ref{2th1}$]
In the Sobolev case of $\varphi(t)\equiv1$, this theorem is proved in \cite[Theorem 3.4.1]{KozlovMazyaRossmann97} (for integer-valued $s$) and \cite[Theorem 2.4.1]{Roitberg99} (for real $s$ and elliptic systems). We will deduce Theorem~\ref{2th1} from the Sobolev case with the help of the interpolation with a function parameter.

Let $s\in\mathbb{R}$ and $\varphi\in\mathcal{M}$. We choose positive numbers $\varepsilon$ and $\delta$ such that $s-\varepsilon$ and $s+\delta$ are integers. According to \cite[Theorem 3.4.1]{KozlovMazyaRossmann97} the mapping \eqref{2f3} extends by continuity to the bounded and Fredholm operators
\begin{equation}\label{2f20}
\begin{gathered}
\Lambda:\mathcal{D}^{s-\varepsilon,(2q)}(\Omega,\Gamma)\rightarrow
\mathcal{E}^{s-\varepsilon-2q,(0)}(\Omega,\Gamma),\quad
\Lambda:\mathcal{D}^{s+\delta,(2q)}(\Omega,\Gamma)\rightarrow
\mathcal{E}^{s+\delta-2q,(0)}(\Omega,\Gamma).
\end{gathered}
\end{equation}
(Recall that we omit the index $\varphi$ in the designations of the spaces if $\varphi\equiv1$.) These operators have the common kernel $N$ and the same index equaled to $\dim N-\dim N^{+}$. The range of the first operator satisfies the equality
\begin{equation}\label{2f21}
\Lambda(\mathcal{D}^{s-\varepsilon,(2q)}(\Omega,\Gamma))=
\bigl\{(f,g)\in\mathcal{E}^{s-\varepsilon-2q,(0)}(\Omega,\Gamma):\,
\eqref{2f14}\;\mbox{is true}\bigr\},
\end{equation}
and analogous formula holds for the second operator.

Let us apply the interpolation with the function parameter $\psi$ from Proposition~\ref{2pr1} to~\eqref{2f20}. According to Proposition \ref{2pr2}, we get the bounded and Fredholm operator
\begin{equation}\label{2f22}
\Lambda:\,\bigl[\mathcal{D}^{s-\varepsilon,(2q)}(\Omega,\Gamma),
\mathcal{D}^{s+\delta,(2q)}(\Omega,\Gamma)\bigr]_{\psi}\rightarrow
\bigl[\mathcal{E}^{s-\varepsilon-2q,(0)}(\Omega,\Gamma),
\mathcal{E}^{s+\delta-2q,(0)}(\Omega,\Gamma)\bigr]_{\psi}.
\end{equation}
This operator is an extension of the mapping \eqref{2f3} by continuity.

Let us describe the interpolation spaces appearing in \eqref{2f22}. Applying Propositions \ref{2pr3} and \ref{2pr1} successively, we obtain
\begin{gather*}
\bigl[\mathcal{D}^{s-\varepsilon,(2q)}(\Omega,\Gamma),
\mathcal{D}^{s+\delta,(2q)}(\Omega,\Gamma)\bigr]_{\psi}=\\
=\bigl[H^{s-\varepsilon,(2q)}(\Omega),H^{s+\delta,(2q)}(\Omega)\bigr]_{\psi}
\oplus\bigoplus_{k=1}^{\varkappa}\,
\bigl[H^{s+r_{k}-1/2-\varepsilon}(\Gamma),
H^{s+r_{k}-1/2+\delta}(\Gamma)\bigr]_{\psi}=\\
=H^{s,\varphi,(2q)}(\Omega)\oplus
\bigoplus_{k=1}^{\varkappa}H^{s+r_{k}-1/2,\varphi}(\Gamma)=
\mathcal{D}^{s,\varphi,(2q)}(\Omega,\Gamma).
\end{gather*}
Analogously,
$$
[\mathcal{E}^{s-\varepsilon-2q,(0)}(\Omega,\Gamma),
\mathcal{E}^{s+\delta-2q,(0)}(\Omega,\Gamma)]_{\psi}=
\mathcal{E}^{s-2q,\varphi,(0)}(\Omega,\Gamma).
$$
These equalities of Hilbert spaces are fulfilled up to equivalence of norms.

Thus, the bounded and Fredholm operator \eqref{2f22} is the operator \eqref{2f12} from Theorem~\ref{2th1}. By virtue of Proposition~\ref{2pr2}, the kernel and index of the operator \eqref{2f12} coincide respectively with the common kernel $N$ and the common index $\dim N-\dim N^+$ of the operators \eqref{2f20}. Moreover, the range of \eqref{2f12} is equal to
$$
\mathcal{E}^{s-2q,\varphi,(0)}(\Omega,\Gamma)\cap
\Lambda\bigl(\mathcal{D}^{s-\varepsilon,(2q)}(\Omega,\Gamma)\bigr)=
\bigl\{(f,g)\in\mathcal{E}^{s-2q,\varphi,(0)}(\Omega,\Gamma):\,\eqref{2f14}
\;\mbox{is true}\bigr\}
$$
in view of \eqref{2f21}.
\end{proof}

\begin{proof}[Proof of Theorem $\ref{2th2}$]
According to Theorem~\ref{2th1}, the restriction of the operator  \eqref{2f12} to $P(\mathcal{D}^{s,\varphi,(2q)}(\Omega,\Gamma))$ is a continuous and one-to-one mapping from the whole subspace  $P(\mathcal{D}^{s,\varphi,(2q)}(\Omega,\Gamma))$ to the whole subspace
$P^{+}(\mathcal{E}^{s-2q,\varphi,(0)}(\Omega,\Gamma))$. Then, by the Banach theorem on inverse operator, this linear mapping is the isomorphism~\eqref{2f17}.
\end{proof}

\begin{proof}[Proof of Theorem $\ref{2th3}$]
Choose vector $(u,v)\in \mathcal{D}^{s,\varphi,(2q)}(\Omega,\Gamma)$ arbitrarily. According to Theorem~\ref{2th2}, we have the inequalities
\begin{equation}\label{2f23}
\begin{gathered}
\|(u,v)\|_{\mathcal{D}^{s,\varphi,(2q)}(\Omega,\Gamma)}\leq
\|P(u,v)\|_{\mathcal{D}^{s,\varphi,(2q)}(\Omega,\Gamma)}+
\|(u,v)-P(u,v)\|_{\mathcal{D}^{s,\varphi,(2q)}(\Omega,\Gamma)}\leq\\
\leq
c_{1}\|\Lambda P(u,v)\|_{\mathcal{E}^{s-2q,\varphi,(0)}(\Omega,\Gamma)}+
c_{2}\|(u,v)-P(u,v)\|_{\mathcal{D}^{s-\sigma,\varphi,(2q)}(\Omega,\Gamma)}.
\end{gathered}
\end{equation}
Here, $c_1$ is the norm of the inverse operator to the isomorphism \eqref{2f17}, whereas $c_2$ is a certain positive number not depending on  $(u,v)$. This number exists because the vector $(u,v)-P(u,v)$ belongs to the finite-dimensional space $N$, where all norms are equivalent, specifically, the norms in $\mathcal{D}^{s,\varphi,(2q)}(\Omega,\Gamma)$ and $\mathcal{D}^{s-\sigma,\varphi,(2q)}(\Omega,\Gamma)$. Besides,
\begin{equation}\label{2f24}
\|P(u,v)\|_{\mathcal{D}^{s-\sigma,\varphi,(2q)}(\Omega,\Gamma)}\leq
c_3\|(u,v)\|_{\mathcal{D}^{s-\sigma,\varphi,(2q)}(\Omega,\Gamma)},
\end{equation}
where $c_3$ is the norm of the projector $P$ acting on the space $\mathcal{D}^{s-\sigma,\varphi,(2q)}(\Omega,\Gamma)$. Now, the formulas \eqref{2f23} and \eqref{2f24} directly imply the required estimate \eqref{2f19}.
\end{proof}

\begin{proof}[Proof of Theorem $\ref{2th4}$]
First, let us prove this theorem in the case of global regularity, where
$\Omega_{0}=\Omega$ and $\Gamma_{0}=\Gamma$. According to Theorem~\ref{2th1}, the vector $(f,g):=\Lambda(u,v)$ satisfies \eqref{2f14}. Therefore, by virtue of the same theorem and the condition $(f,g)\in\mathcal{E}^{s-2q,\varphi,(0)}(\Omega,\Gamma)$, we get the inclusion $(f,g)\in\Lambda(\mathcal{D}^{s,\varphi,(2q)}(\Omega,\Gamma))$.
Hence, together with $\Lambda(u,v)=(f,g)$, we have the equality  $\Lambda(u',v')=(f,g)$ for a certain vector   $(u',v')\in\mathcal{D}^{s,\varphi,(2q)}(\Omega,\Gamma)$. Therefore, $\Lambda(u-\nobreak u',v-v')=0$, that implies
$$
(u-u',v-v')\in N\subset C^{\infty}(\overline{\Omega})\times
\bigl(C^{\infty}(\Gamma)\bigr)^{\varkappa}
$$
in view of Theorem~\ref{2th1}. Thus,
$$
(u,v)=(u',v')+(u-u',v-v')\in\mathcal{D}^{s,\varphi,(2q)}(\Omega,\Gamma).
$$
Theorem \ref{2th4} is proved in the case of global regularity.

We now pass to the general case of local regularity and deduce the theorem in this case from the specific case of global regularity just considered.
We will previously prove that the following implication holds true for each $p\geq1$:
\begin{equation}\label{2f25}
(u,v)\in
\mathcal{D}^{s-p,\varphi,(2q)}_{\mathrm{loc}}(\Omega_{0},\Gamma_{0})
\;\Rightarrow\;
(u,v)\in
\mathcal{D}^{s-p+1,\varphi,(2q)}_{\mathrm{loc}}(\Omega_{0},\Gamma_{0})
\end{equation}
under the conditions of the theorem.

Suppose that the premise of this implication is valid for a certain number $p\geq1$. We arbitrarily choose a function $\chi\in C^{\infty}(\overline{\Omega})$ such that $\mathrm{supp}\,\chi\subset\Omega_{0}\cup\Gamma_{0}$. We also choose a certain function $\eta\in C^{\infty}(\overline{\Omega})$ that satisfies the conditions $\mathrm{supp}\,\eta\subset\Omega_{0}\cup\Gamma_{0}$ and $\eta=1$ on a neighbourhood of $\mathrm{supp}\,\chi$.

Interchanging the operator of the multiplication by $\chi$ with each of the differential operators $A$, $B_{j}$, and $C_{j,k}$, we can write down  the following equalities:
\begin{gather}\label{2f26}
A(\chi u)=A(\chi\eta u)=\chi A(\eta u)+A'(\eta u)=\chi Au+A'(\eta u),\\
B_{j}(\chi u)=B_{j}(\chi\eta u)=\chi B_{j}(\eta u)+B'_{j}(\eta u)=
\chi B_{j}u+B'_{j}(\eta u),\label{2f27}\\
\begin{gathered}\label{2f28}
C_{j,k}(\chi v_k)=C_{j,k}(\chi\eta v_k)=\chi C_{j,k}(\eta v_k)+
C'_{j,k}(\eta v_k)=
\chi C_{j,k}v_k+C'_{j,k}(\eta v_k).
\end{gathered}
\end{gather}
Here, $A'$ is a certain differential operator on  $\overline{\Omega}$, $B'_{j}$ is a certain boundary differential operator on $\Gamma$, and $C'_{j,k}$ is a certain tangent differential operator on $\Gamma$. These operators are linear, all of their coefficients are infinitely smooth on the indicated sets, and the orders of these operators satisfy the conditions
\begin{equation}\label{2f29}
\mathrm{ord}\,A'\leq 2q-1,\quad\mathrm{ord}\,B'_{j}\leq m_{j}-1,\quad
\mathrm{ord}\,C'_{j,k}\leq m_{j}+r_{k}-1.
\end{equation}

We put
$$
\Lambda'(u,v):=\biggl(A'u,\,B'_{1}u+\sum_{k=1}^{\varkappa}C'_{1,k}v_{k},...,
B'_{q+\varkappa}u+\sum_{k=1}^{\varkappa}C'_{q+\varkappa,k}v_{k}\biggr).
$$
It follows from \eqref{2f26}--\eqref{2f28} that
\begin{equation}\label{2f30}
\Lambda(\chi u,\chi v)=\chi\Lambda(u,v)+\Lambda'(\eta u,\eta v)=
(\chi f,\chi g)+\Lambda'(\eta u,\eta v).
\end{equation}
Here, by the condition of the theorem,
\begin{equation}\label{2f31}
(\chi f,\chi g)\in\mathcal{E}^{s-2q,\varphi,(0)}(\Omega,\Gamma)\subseteq
\mathcal{E}^{s-p+1-2q,\varphi,(0)}(\Omega,\Gamma).
\end{equation}
According to the premise of the implication,  we have $(\eta u,\eta v)\in\mathcal{D}^{s-p,\varphi,(2q)}(\Omega,\Gamma)$. It follows from the latter inclusion and the inequalities \eqref{2f29} that
\begin{equation}\label{2f32}
\Lambda'(\eta u,\eta v)\in \mathcal{E}^{s-p-(2q-1),\varphi,(0)}(\Omega,\Gamma)=
\mathcal{E}^{s-p+1-2q,\varphi,(0)}(\Omega,\Gamma)
\end{equation}
in view of \cite[Theorem 4.13 and Lemma 2.5]{MikhailetsMurach10, MikhailetsMurach14}.

The formulas \eqref{2f30}--\eqref{2f32} gives the inclusion
\begin{equation}\label{2f33}
\Lambda(\chi u,\chi v)\in \mathcal{E}^{s-p+1-2q,\varphi,(0)}(\Omega,\Gamma).
\end{equation}
By virtue of Theorem \ref{2th4} in the global case proved above, we conclude that \eqref{2f33} implies the inclusion $(\chi u,\chi v)\in\mathcal{D}^{s-p+1,\varphi,(2q)}(\Omega,\Gamma)$. In view of arbitrariness of the indicated choice of $\chi$, this inclusion means that the inference of the implication \eqref{2f25} is true. Thus, we have proved this implication for each $p\geq1$.

Now, using the implication \eqref{2f25}, we can prove Theorem \ref{2th4} in the general case. By the condition of this theorem, $(u,v)\in\mathcal{D}^{-\infty,(2q)}(\Omega,\Gamma)$. It follows from this inclusion, in view of \eqref{2f9} and \eqref{2f11}, that
$$
(u,v)\in\mathcal{D}^{s-\lambda,\varphi,(2q)}(\Omega,\Gamma)\subseteq
\mathcal{D}^{s-\lambda,\varphi,(2q)}_{\mathrm{loc}}(\Omega_{0},\Gamma_{0})
$$
for a certain integer $\lambda\geq1$. Using the implication \eqref{2f25} successively for values $p=\lambda$, $p=\lambda-1$, ..., and $p=1$, we conclude that
\begin{gather*}
(u,v)\in
\mathcal{D}^{s-\lambda,\varphi,(2q)}_{\mathrm{loc}}(\Omega_{0},\Gamma_{0})
\;\Rightarrow\;
(u,v)\in
\mathcal{D}^{s-\lambda+1,\varphi,(2q)}_{\mathrm{loc}}(\Omega_{0},\Gamma_{0})
\;\Rightarrow\;\ldots\\
\ldots\;\Rightarrow\;
(u,v)\in\mathcal{D}^{s-1,\varphi,(2q)}_{\mathrm{loc}}(\Omega_{0},\Gamma_{0})
\;\Rightarrow\;
(u,v)\in\mathcal{D}^{s,\varphi,(2q)}_{\mathrm{loc}}(\Omega_{0},\Gamma_{0}).
\end{gather*}
Thus, we have proved Theorem \ref{2th4} in the general case.
\end{proof}

\section{Applications}\label{sec7}

In this section, we will apply Theorem~\ref{2th4} to obtain new sufficient conditions under which a generalized solution to the elliptic problem \eqref{2f1}, \eqref{2f2} has continuous classical derivatives of a prescribed order, specifically, is a classical solution. To this end, we also use the following version \cite[Theorems 2.8 and 3.4]{MikhailetsMurach10, MikhailetsMurach14} of the H\"ormander embedding theorem \cite[Theorem 2.2.7]{Hermander63}.

\begin{proposition}\label{2pr4}
Let an integer $\ell\geq0$ and function $\varphi\in\mathcal{M}$ be arbitrarily chosen. Then each of the embeddings $H^{\ell+n/2,\varphi}(\Omega)\subset C^{\ell}(\overline{\Omega})$ and
$H^{\ell+(n-1)/2,\varphi}(\Gamma)\subset C^{\ell}(\Gamma)$ is equivalent to the condition
\begin{equation}\label{2f34}
\int\limits_{1}^{\infty}\frac{dt}{t\,\varphi^{2}(t)}<\infty.
\end{equation}
These embeddings are compact.
\end{proposition}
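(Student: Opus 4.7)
The strategy is to reduce the statement to its whole-space versions on $\mathbb{R}^{n}$ and $\mathbb{R}^{n-1}$ by a direct Fourier-analytic calculation, and then to transfer them to $\overline{\Omega}$ by restriction of distributions and to $\Gamma$ by local charts with a partition of unity.

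I would first prove the equivalence of $H^{\ell+n/2,\varphi}(\mathbb{R}^{n})\subset C^{\ell}(\mathbb{R}^{n})$ and \eqref{2f34}. For $w\in H^{\ell+n/2,\varphi}(\mathbb{R}^{n})$ and $|\mu|\leq\ell$, Fourier inversion gives
\[
D^{\mu}w(x)=(2\pi)^{-n/2}\int_{\mathbb{R}^{n}}e^{ix\cdot\xi}\xi^{\mu}\widehat{w}(\xi)\,d\xi,
\]
and the factorization of the integrand as $\bigl[\langle\xi\rangle^{\ell+n/2}\varphi(\langle\xi\rangle)\widehat{w}(\xi)\bigr]\cdot\bigl[\xi^{\mu}\langle\xi\rangle^{-\ell-n/2}\varphi^{-1}(\langle\xi\rangle)\bigr]$ combined with Cauchy--Schwarz gives $\sup_{x}|D^{\mu}w(x)|\leq C\|w\|_{H^{\ell+n/2,\varphi}(\mathbb{R}^{n})}$ with
\[
C^{2}\asymp\int_{\mathbb{R}^{n}}\frac{d\xi}{\langle\xi\rangle^{n}\varphi^{2}(\langle\xi\rangle)},
\]
which by spherical coordinates is finite precisely under \eqref{2f34}. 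Continuity of $D^{\mu}w$ then follows from dominated convergence. Conversely, assuming the embedding, the closed graph theorem makes it continuous, so $w\mapsto D^{\mu}w(0)$ is a bounded functional on $H^{\ell+n/2,\varphi}(\mathbb{R}^{n})$; its dual norm coincides with the $L^{2}$-norm of $\xi^{\mu}\langle\xi\rangle^{-\ell-n/2}\varphi^{-1}(\langle\xi\rangle)$, so for a multi-index of length $\ell$ its finiteness forces \eqref{2f34}.

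Next I would transfer the result to $\overline{\Omega}$: sufficiency is immediate from the definition of $H^{\ell+n/2,\varphi}(\Omega)$ as a restriction space, and necessity follows by testing with distributions supported in a small ball $B\Subset\Omega$, which embed into $H^{\ell+n/2,\varphi}(\Omega)$ and inherit $C^{\ell}$-regularity on $\overline{\Omega}$ by assumption. For $\Gamma$, the finite atlas $\alpha_{j}:\mathbb{R}^{n-1}\leftrightarrow\Gamma_{j}$ and partition of unity $\{\chi_{j}\}$ from Section~\ref{sec3} reduce the condition $h\in H^{\ell+(n-1)/2,\varphi}(\Gamma)$ to $(\chi_{j}h)\circ\alpha_{j}\in H^{\ell+(n-1)/2,\varphi}(\mathbb{R}^{n-1})$ for each $j$, and the same Fourier argument, now in dimension $n-1$ (whence the shifted index $\ell+(n-1)/2$), leaves \eqref{2f34} unchanged.

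For the compactness assertion I would repeat the Cauchy--Schwarz bound on the increment $D^{\mu}w(x)-D^{\mu}w(y)$, obtaining
\[
|D^{\mu}w(x)-D^{\mu}w(y)|^{2}\leq\|w\|_{H^{\ell+n/2,\varphi}(\mathbb{R}^{n})}^{2}\int_{\mathbb{R}^{n}}\frac{|e^{ix\cdot\xi}-e^{iy\cdot\xi}|^{2}}{\langle\xi\rangle^{n}\varphi^{2}(\langle\xi\rangle)}\,d\xi,
\]
and then combining \eqref{2f34} with the pointwise bound $|e^{ix\cdot\xi}-e^{iy\cdot\xi}|\leq 2$ to apply dominated convergence: the right-hand integral tends to zero as $|x-y|\to 0$, which yields equicontinuity of the $\overline{\Omega}$-restrictions of bounded subsets of $H^{\ell+n/2,\varphi}(\mathbb{R}^{n})$ in $C^{\ell}(\overline{\Omega})$. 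Together with the uniform boundedness established in the first step, Arzel\`a--Ascoli gives compactness; the same argument, run in local charts, handles $\Gamma$. The main obstacle I anticipate is the necessity clause: one must carefully dualize the embedding via the closed graph theorem and, in the cases of $\Omega$ and $\Gamma$, ensure that the transfer to a bounded domain or compact manifold preserves the high-frequency information encoded by \eqref{2f34}.
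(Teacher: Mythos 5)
First, a point of reference: the paper does not prove Proposition \ref{2pr4} at all --- it is quoted from the monograph of Mikhailets and Murach (Theorems 2.8 and 3.4 there) as a version of H\"ormander's embedding theorem. Your plan reproduces what is essentially the standard proof of that result: Fourier inversion plus Cauchy--Schwarz with the factorization $\xi^{\mu}\widehat{w}=\bigl[\langle\xi\rangle^{\ell+n/2}\varphi\widehat{w}\bigr]\cdot\bigl[\xi^{\mu}\langle\xi\rangle^{-\ell-n/2}\varphi^{-1}\bigr]$, the passage to polar coordinates identifying the constant with \eqref{2f34}, duality for necessity on $\mathbb{R}^{n}$, and equicontinuity via the increment bound for compactness. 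All of that is correct, including the splitting $|e^{ix\cdot\xi}-e^{iy\cdot\xi}|^{2}\le\min(4,|x-y|^{2}|\xi|^{2})$ needed to make the dominated-convergence step uniform over the unit ball, and the observation that one must take $|\mu|=\ell$ in the converse (for $|\mu|<\ell$ the relevant integral converges unconditionally).

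The one genuine soft spot is the one you yourself flag: necessity over $\Omega$ and over $\Gamma$. The clean duality argument on $\mathbb{R}^{n}$ does not transfer verbatim, because when you restrict the evaluation functional $w\mapsto D^{\mu}w(x_{0})$ to the subspace of elements supported in a ball $B\Subset\Omega$, its dual norm is an infimum over Hahn--Banach extensions and need not equal the $L_{2}$-norm of $\xi^{\mu}\langle\xi\rangle^{-\ell-n/2}\varphi^{-1}$; boundedness on the subspace is a priori weaker than membership of that symbol in $L_{2}$. To close this you need an explicit construction: assuming \eqref{2f34} fails, produce a sequence $w_{k}=\chi u_{k}$ with $\chi\in C^{\infty}_{0}(B)$, $\widehat{u_{k}}$ a nonnegative truncation of $\langle\xi\rangle^{-\ell-n/2}\varphi^{-1}(\langle\xi\rangle)$ on a cone times a phase, bounded in $H^{\ell+n/2,\varphi}(\mathbb{R}^{n})$ (using that multiplication by $\chi$ is bounded on these spaces) but with $D^{\mu}w_{k}(x_{0})\to\infty$ after controlling the lower-order Leibniz terms. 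This is standard but carries real content, and the same remark applies in the local charts on $\Gamma$. With that step supplied, your plan is a complete and correct proof, by the same route as the cited source.
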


In connection with this proposition, we recall that, by the Sobolev embedding theorem, the embedding $H^{\sigma}(\Omega)\hookrightarrow C^{\ell}(\overline{\Omega})$ is equivalent to the condition $\sigma>\ell+n/2$. An analogous result is true for the Sobolev spaces over $\Gamma$. Thus, Proposition \ref{2pr4} refines, so to say, the Sobolev embedding theorem in the limiting case.

Applying Theorem \ref{2th4} and Proposition \ref{2pr4}, we can obtain the following result.

\begin{theorem}\label{2th5}
Let an integer $\ell$ meet the inequalities $\ell\geq0$ and $\ell>2q-(n+1)/2$. Suppose that the condition of Theorem~$\ref{2th4}$ is fulfilled for $s=\ell+n/2$ and a certain function $\varphi\in \mathcal{M}$ that satisfies \eqref{2f34}. Then $u\in C^{\ell}(\Omega_{0}\cup\Gamma_{0})$.
\end{theorem}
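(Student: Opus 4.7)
The plan is to chain together Theorem~\ref{2th4} (a regularity shift in the refined scale) with Proposition~\ref{2pr4} (the H\"ormander embedding into $C^{\ell}$). Set $s:=\ell+n/2$. The key preliminary observation is that the arithmetic assumption $\ell>2q-(n+1)/2$ is exactly what is needed to push $s$ past $2q-1/2$: one has
\begin{equation*}
s=\ell+\tfrac{n}{2}>2q-\tfrac{n+1}{2}+\tfrac{n}{2}=2q-\tfrac{1}{2}.
\end{equation*}
By the identifications noted right after \eqref{2f8}, this guarantees $H^{s,\varphi,(2q)}(\Omega)=H^{s,\varphi}(\Omega)$ up to equivalence of norms, so the Roitberg-type modification disappears in the relevant range and we may work with the honest H\"ormander space.

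Next I would apply Theorem~\ref{2th4} with this $s$ and the given $\varphi\in\mathcal{M}$. The hypothesis of Theorem~\ref{2th5} supplies precisely the input $(f,g)\in\mathcal{E}^{s-2q,\varphi,(0)}_{\mathrm{loc}}(\Omega_{0},\Gamma_{0})$ required there, so the conclusion $(u,v)\in\mathcal{D}^{s,\varphi,(2q)}_{\mathrm{loc}}(\Omega_{0},\Gamma_{0})$ is immediate. Unwinding the definition of the local space, this means that for every $\chi\in C^{\infty}(\overline{\Omega})$ with $\mathrm{supp}\,\chi\subset\Omega_{0}\cup\Gamma_{0}$ one has $\chi u\in H^{s,\varphi,(2q)}(\Omega)=H^{s,\varphi}(\Omega)$. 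Since $\varphi$ satisfies the integral condition \eqref{2f34} and $s=\ell+n/2$, Proposition~\ref{2pr4} yields the embedding $H^{s,\varphi}(\Omega)\hookrightarrow C^{\ell}(\overline{\Omega})$, whence $\chi u\in C^{\ell}(\overline{\Omega})$.

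To pass from this cut-off statement to $u\in C^{\ell}(\Omega_{0}\cup\Gamma_{0})$, I would fix an arbitrary point $x_{0}\in\Omega_{0}\cup\Gamma_{0}$ and choose $\chi$ so that $\chi\equiv1$ on a neighbourhood $U$ of $x_{0}$ with $\mathrm{supp}\,\chi\subset\Omega_{0}\cup\Gamma_{0}$; then $u=\chi u\in C^{\ell}$ on $U$, and since $x_{0}$ was arbitrary the required classical regularity follows. There is no genuinely hard step here: the entire argument is a careful bookkeeping of parameters. The only point requiring attention, and therefore the item I would flag as the main (minor) obstacle, is verifying that the numerical hypothesis on $\ell$ indeed puts $s$ above the critical threshold $2q-1/2$, for otherwise the final embedding would have to be applied to $H^{s,\varphi,(2q)}(\Omega)$ directly and the H\"ormander embedding of Proposition~\ref{2pr4} (formulated for $H^{\ell+n/2,\varphi}(\Omega)$) would not be applicable without further work.
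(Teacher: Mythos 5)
Your proposal is correct and follows essentially the same route as the paper's proof: apply Theorem~\ref{2th4} with $s=\ell+n/2$, use the inequality $\ell+n/2>2q-1/2$ to identify $H^{s,\varphi,(2q)}(\Omega)$ with $H^{s,\varphi}(\Omega)$, and conclude via the embedding of Proposition~\ref{2pr4} after localizing with a cut-off function equal to $1$ near an arbitrary point of $\Omega_{0}\cup\Gamma_{0}$.
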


\begin{proof}
Let a point $x_{0}\in\Omega_0\cup\Gamma_{0}$ be arbitrary. We choose a function $\chi\in C^{\infty}(\overline{\Omega})$ such that  $\mathrm{supp}\,\chi\subset\Omega_0\cup\Gamma_{0}$ and $\chi=1$ in a certain neighbourhood $V_{0}$ of the point $x_{0}$. (Of course, the neighbourhood is considered in the topology of $\overline{\Omega}$.)  According to Theorem~$\ref{2th4}$, we have the inclusion $\chi u\in H^{\ell+n/2,\varphi,(2q)}(\Omega)$. Applying Proposition \ref{2pr4}, we deduce that
$$
\chi u\in H^{\ell+n/2,\varphi,(2q)}(\Omega)=H^{\ell+n/2,\varphi}(\Omega)
\subset C^{\ell}(\overline{\Omega}).
$$
Here, the equality holds true because $\ell+n/2>2q-1/2$ by the condition of this theorem. Therefore, $u\in C^{\ell}(V_{0})$. This means the inclusion $u\in C^{\ell}(\Omega_{0}\cup\Gamma_{0})$ in view of arbitrariness of the choice of $x_{0}\in\Omega_0\cup\Gamma_{0}$.
\end{proof}

\begin{remark}\label{2rem1}
The conclusion of Theorem \ref{2th5} remains valid if we omit the condition $\ell>2q-(n+1)/2$ and additionally suppose that $u\in H^{\sigma}(\Omega)$ for certain $\sigma>2q-1/2$. Indeed, if $\ell\leq2q-(n+1)/2$, then $H^{\sigma}(\Omega)\subset C^{\ell}(\overline{\Omega})$ by virtue of the Sobolev embedding theorem and the assumption $\sigma>2q-1/2$.
\end{remark}

Let us formulate a version of Theorem \ref{2th5} for the function $v_{k}$, with $k\in\{1,\ldots,\varkappa\}$.

\begin{theorem}\label{2th6}
Let $\ell\in\mathbb{Z}$ and $\ell\geq0$. Suppose that the condition of Theorem~$\ref{2th4}$ is fulfilled for $s=\ell-r_{k}+n/2$ and a certain function $\varphi\in \mathcal{M}$ that satisfies \eqref{2f34}. Then $v_{k}\in C^{\ell}(\Gamma_{0})$.
\end{theorem}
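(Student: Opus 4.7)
The plan is to mirror the proof of Theorem~\ref{2th5} but track the $v_k$ component of the generalized solution instead of the component $u$. The only arithmetic that must be checked is that under the substitution $s=\ell-r_k+n/2$ the boundary summand of $\mathcal{D}^{s,\varphi,(2q)}(\Omega,\Gamma)$ corresponding to index $k$, which is $H^{s+r_k-1/2,\varphi}(\Gamma)$, becomes exactly $H^{\ell+(n-1)/2,\varphi}(\Gamma)$. That is precisely the critical space in Proposition~\ref{2pr4} for the embedding into $C^{\ell}(\Gamma)$, so Karamata-type decay \eqref{2f34} delivers classical $C^{\ell}$-smoothness.

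Concretely, I would proceed as follows. Fix an arbitrary point $y_{0}\in\Gamma_{0}$ and choose a cut-off function $\chi\in C^{\infty}(\overline{\Omega})$ with $\mathrm{supp}\,\chi\subset\Omega_{0}\cup\Gamma_{0}$ and $\chi\equiv1$ on some neighbourhood $W_{0}$ of $y_{0}$ (taken in the topology of $\overline{\Omega}$). The hypotheses of Theorem~\ref{2th4} are in force for our choice of $s$ and $\varphi$, hence
$$
(u,v)\in\mathcal{D}^{s,\varphi,(2q)}_{\mathrm{loc}}(\Omega_{0},\Gamma_{0}),
$$
which means, by the very definition of the local space, that $\chi(u,v)\in\mathcal{D}^{s,\varphi,(2q)}(\Omega,\Gamma)$. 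Projecting onto the $(k+1)$-st summand of this direct sum, I obtain
$$
\chi\!\upharpoonright\!\Gamma\cdot v_{k}\in H^{s+r_{k}-1/2,\varphi}(\Gamma)=H^{\ell+(n-1)/2,\varphi}(\Gamma).
$$

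Applying Proposition~\ref{2pr4} on the manifold $\Gamma$, together with condition \eqref{2f34}, I conclude that $\chi\!\upharpoonright\!\Gamma\cdot v_{k}\in C^{\ell}(\Gamma)$. Since $\chi\equiv1$ on $W_{0}$, this gives $v_{k}\in C^{\ell}(W_{0}\cap\Gamma)$. As $y_{0}\in\Gamma_{0}$ was arbitrary and $\Gamma_{0}$ is covered by such neighbourhoods, the desired conclusion $v_{k}\in C^{\ell}(\Gamma_{0})$ follows.

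I do not expect any real obstacle here: unlike Theorem~\ref{2th5}, there is no analogue of the condition $\ell>2q-(n+1)/2$ to worry about, because the relevant space $H^{\ell+(n-1)/2,\varphi}(\Gamma)$ is an honest distribution space on $\Gamma$ for every $\ell\geq0$ (no Roitberg-type modification is needed on the boundary), so the embedding from Proposition~\ref{2pr4} applies directly without any threshold restriction. The only minor point to verify carefully is that multiplication by the smooth cut-off $\chi\!\upharpoonright\!\Gamma$ indeed preserves the H\"ormander space $H^{\ell+(n-1)/2,\varphi}(\Gamma)$ and that it commutes, in the sense required by the local-space definition, with extraction of the $v_k$ component; both facts are standard and are already used implicitly in the proof of Theorem~\ref{2th5}.
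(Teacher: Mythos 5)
Your proposal is correct and follows essentially the same route as the paper: localize with a cut-off $\chi$ supported in $\Omega_{0}\cup\Gamma_{0}$, use Theorem~\ref{2th4} to place $\chi v_{k}$ in $H^{s+r_{k}-1/2,\varphi}(\Gamma)=H^{\ell+(n-1)/2,\varphi}(\Gamma)$, and conclude by the embedding of Proposition~\ref{2pr4}. Your observation that no analogue of the threshold $\ell>2q-(n+1)/2$ is needed on the boundary is also exactly the point implicit in the paper's argument.
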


\begin{proof}
Let a point $x_{0}\in\Gamma_{0}$ be arbitrary, and let a function $\chi$ and neighbourhood $V_{0}$ be those as in the proof of Theorem~\ref{2th5}. According to Theorem~$\ref{2th4}$, we have the inclusion $\chi v_{k}\in H^{\ell+(n-1)/2,\varphi}(\Gamma)$. Hence, $\chi v_{k}\in C^{\ell}(\Gamma)$ by Proposition~\ref{2pr4}. Therefore, $v_{k}\in C^{\ell}(V_{0}\cap\Gamma)$. This means the inclusion $v_{k}\in C^{\ell}(\Gamma_{0})$ in view of arbitrariness of the choice of $x_{0}\in\Gamma_{0}$.
\end{proof}

\begin{remark}\label{2rem2}
It follows from Proposition~\ref{2pr4} that the condition \eqref{2f34} not only is sufficient in Theorems \ref{2th5} and \ref{2th6} but also is necessary on the class of all the solutions considered.
\end{remark}

Using Theorems \ref{2th5} and \ref{2th6} we can deduce the following sufficient condition under which a generalized solution $(u,v)$ to the elliptic problem \eqref{2f1}, \eqref{2f2} is classical, i.e., $u\in C^{2q}(\Omega)\bigcap C^{m}(\overline{\Omega})$ and $v_{k}\in C^{m+r_{k}}(\Gamma)$ for all $k\in\{1,\ldots,\varkappa\}$. Here, we denote $m:=\nobreak\mathrm{max}\{m_{1},...,m_{q+\varkappa}\}$.

\begin{theorem}\label{2th7}
Let $(u,v)\in\mathcal{D}^{-\infty,(2q)}(\Omega,\Gamma)$, and let $u\in H^{\sigma}(\Omega)$ for certain $\sigma>2q-1/2$. Suppose that the vector
$(u,v)$ is a solution to the elliptic problem \eqref{2f1}, \eqref{2f2} whose right-hand sides satisfy the condition
$$
(f,g)\in\mathcal{E}^{n/2,\varphi_{1}}_{\mathrm{loc}}(\Omega,\varnothing)\cap
\mathcal{E}^{m+n/2-2q,\varphi_{2}}(\Omega)
$$
for some functions $\varphi_{1},\varphi_{2}\in\mathcal{M}$ that meet the condition \eqref{2f34} with $\varphi:=\varphi_{1}$ and $\varphi:=\varphi_{2}$ respectively. Then the solution $(u,v)$ is classical.
\end{theorem}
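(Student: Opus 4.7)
The plan is to apply Theorems~\ref{2th5} and~\ref{2th6} three times, once for each of the ingredients in the definition of a classical solution: interior smoothness of $u$, smoothness of $u$ up to the boundary, and smoothness of each $v_{k}$. In every case I will align the regularity index $s$ supplied by the hypothesis with the value required by Theorem~\ref{2th4} (as applied inside Theorems~\ref{2th5} and~\ref{2th6}), so that no new analytic work is needed beyond routine bookkeeping.

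First, to establish $u\in C^{2q}(\Omega)$, I would invoke Theorem~\ref{2th5} with $\Omega_{0}:=\Omega$, $\Gamma_{0}:=\varnothing$, $\ell:=2q$, and $\varphi:=\varphi_{1}$. The conditions $\ell\geq0$ and $\ell>2q-(n+1)/2$ are immediate since $n\geq2$. With $s:=\ell+n/2=2q+n/2$, the premise of Theorem~\ref{2th4} becomes $(f,g)\in\mathcal{E}^{n/2,\varphi_{1},(0)}_{\mathrm{loc}}(\Omega,\varnothing)$, which is exactly the first half of the hypothesis on $(f,g)$. Since $\varphi_{1}$ satisfies~\eqref{2f34}, Theorem~\ref{2th5} produces $u\in C^{2q}(\Omega)$.

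Second, to establish $u\in C^{m}(\overline{\Omega})$, I would apply Theorem~\ref{2th5} once more, this time with $\Omega_{0}:=\Omega$, $\Gamma_{0}:=\Gamma$, $\ell:=m$, and $\varphi:=\varphi_{2}$. The corresponding index $s=m+n/2$ requires $(f,g)\in\mathcal{E}^{m+n/2-2q,\varphi_{2},(0)}(\Omega,\Gamma)$, which is the second half of the hypothesis. If $m>2q-(n+1)/2$, Theorem~\ref{2th5} applies directly; otherwise I would fall back on Remark~\ref{2rem1}, which drops that inequality at the price of an auxiliary assumption $u\in H^{\sigma}(\Omega)$ for some $\sigma>2q-1/2$, precisely what we are given. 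Third, for each $k\in\{1,\ldots,\varkappa\}$, I would apply Theorem~\ref{2th6} with $\Gamma_{0}:=\Gamma$, $\ell:=m+r_{k}$, and $\varphi:=\varphi_{2}$. The index becomes $s=\ell-r_{k}+n/2=m+n/2$, so the required premise is again the same global inclusion $(f,g)\in\mathcal{E}^{m+n/2-2q,\varphi_{2},(0)}(\Omega,\Gamma)$, yielding $v_{k}\in C^{m+r_{k}}(\Gamma)$.

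The main obstacle I anticipate is the bookkeeping in the second step: one has to recognize that, because $m$ lies in $\{0,\ldots,2q-1\}$, the inequality $\ell>2q-(n+1)/2$ can fail for small $n$ and small $m$, and that the auxiliary Sobolev assumption $u\in H^{\sigma}(\Omega)$ with $\sigma>2q-1/2$ is built into the hypothesis precisely to activate Remark~\ref{2rem1} in that range. Aside from this, the proof is a straightforward alignment of refined-regularity indices with the Hörmander integral condition~\eqref{2f34}, and the three conclusions combine to give exactly the classical regularity claimed.
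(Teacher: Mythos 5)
Your proposal is correct and follows essentially the same route as the paper's own proof: Theorem~\ref{2th5} with $\ell=2q$, $\Omega_{0}=\Omega$, $\Gamma_{0}=\varnothing$ for $u\in C^{2q}(\Omega)$; Theorem~\ref{2th5} with $\ell=m$, $\Gamma_{0}=\Gamma$ together with Remark~\ref{2rem1} (activated by the hypothesis $u\in H^{\sigma}(\Omega)$, $\sigma>2q-1/2$) for $u\in C^{m}(\overline{\Omega})$; and Theorem~\ref{2th6} with $\ell=m+r_{k}$ for each $v_{k}$. Your explicit remark that the inequality $\ell>2q-(n+1)/2$ may fail for $\ell=m$ and that this is exactly why the auxiliary Sobolev assumption is present matches the paper's (terser) appeal to Remark~\ref{2rem1}.
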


\begin{proof}
The inclusion $u\in C^{2q}(\Omega)$ follows from the condition $(f,g)\in\mathcal{E}^{n/2,\varphi_{1}}_{\mathrm{loc}}(\Omega,\varnothing)$ by virtue of Theorem~\ref{2th5} with $\ell:=2q$, $\Omega_{0}:=\Omega$, and $\Gamma_{0}:=\varnothing$. Besides, the inclusion $u\in C^{m}(\overline{\Omega})$ follows from the condition $(f,g)\in\mathcal{E}^{m+n/2-2q,\varphi_{2}}(\Omega)$ in view of the same theorem with $\ell:=m$, $\Omega_{0}:=\Omega$, and $\Gamma_{0}:=\Gamma$ if we take into account Remark \ref{2rem1}. Finally, the inclusion $v_{k}\in C^{m+r_{k}}(\Gamma)$ for every $k\in\{1,\ldots,\varkappa\}$ follows from the latter condition by virtue of Theorem \ref{2th6} with $\ell:=m+r_{k}$ and $\Gamma_{0}:=\Gamma$.
\end{proof}


\begin{thebibliography}{99}

\bibitem{AnopMurach14MFAT2}
A. V. Anop,  A. A. Murach, \textit{Parameter-elliptic problems and interpolation with a function parameter}, Methods Funct. Anal. Topology \textbf{20} (2014), no.~2, 103--116.

\bibitem{AnopMurach14UMJ7}
A. V. Anop,  A. A. Murach, \textit{Regular elliptic boundary-value problems in the extended Sobolev scale}, Ukrainian Math. J. \textbf{66} (2014), no.~7, 969--985.

\bibitem{AslanyanVassilievLidskii81}
A. G. Aslanyan,  D. G. Vassiliev, V. B. Lidskii, \textit{Frequences of free oscillations of thin shell interacting with fluid}, Functional Anal. Appl. \textbf{15} (1981), no.~3, 157--164.

\bibitem{Berezansky68}
Ju. M. Berezanskii, \textit{Expansions in Eigenfunctions of Selfadjoint
Operators}, Amer. Math. Soc., Providence, RI, 1968.

\bibitem{BinghamGoldieTeugels89}
N. H. Bingham, C. M. Goldie, J. L. Teugels, \textit{Regular variation}, Cambridge University Press, Cambridge, 1989.

\bibitem{FoiasLions61}
C. Foia\c{s}, J.-L.  Lions, \textit{Sur certains th\'eor\`emes d'interpolation}, Acta Scient. Math. (Szeged) \textbf{22} (1961), no.~3--4, 269--282.

\bibitem{Chepurukhina14Coll2}
I.~S.~Chepurukhina, \textit{On some classes of elliptic boundary-value problems in spaces of generalized smoothness}, Collection of Papers of the Institute of Mathematics of the Ukrainian National Academy of Sciences \textbf{11} (2014), no.~2, 284--304. (Ukrainian)

\bibitem{Garlet90}
P.~G.~Ciarlet, \textit{Plates and junctions in ellastic multistructures. An asymptotic analysis}, Mayson, Paris, 1990.

\bibitem{Hermander63}
L. H\"ormander, \textit{Linear Partial Differential Operators}, Springer, Berlin, 1963.

\bibitem{Hermander83}
L. H\"ormander, \textit{The Analysis of Linear Partial Differential Operators}, vol.~2, Springer, Berlin, 1983.

\bibitem{Karamata30a}
J. Karamata, \textit{Sur certains "Tauberian theorems" \;de M.~M.~Hardy et Litt\-lewood}, Mathematica (Cluj) \textbf{3} (1930), 33--48.

\bibitem{KozlovMazyaRossmann97}
V. A. Kozlov, V. G. Maz'ya, J. Rossmann, \textit{Elliptic boundary value problems in domains with point singularities}, Amer. Math. Soc., Providence, RI, 1997.

\bibitem{Lawruk63a}
B.~Lawruk, \textit{Parametric boundary-value problems for elliptic systems of linear differential equations. I. Construction of conjugate problems}, Bull. Acad. Polon. Sci. S\'{e}r. Sci. Math. Astronom. Phys. \textbf{11} (1963), no.~5, 257--267 (Russian).

\bibitem{Lawruk63b}
B.~Lawruk, \textit{Parametric boundary-value problems for elliptic systems of linear differential equations. II. A boundary-value problem for a half-space}, Bull. Acad. Polon. Sci. S\'{e}r. Sci. Math. Astronom. Phys. \textbf{11} (1963), no.~5, 269--278 (Russian).

\bibitem{Lawruk65}
B.~Lawruk, \textit{Parametric boundary-value problems for elliptic systems of linear differential equations. III. Conjugate boundary problem for a half-space}, Bull. Acad. Polon. Sci. S\'{e}r. Sci. Math. Astronom. Phys. \textbf{13} (1965), no.~2, 105--110 (Russian).

\bibitem{MikhailetsMurach05UMJ5}
V. A. Mikhailets, A. A. Murach, \textit{Elliptic operators in a refined scale of function spaces}, Ukrainian Math. J. \textbf{57} (2005), no.~5, 817--825.

\bibitem{MikhailetsMurach06UMJ3}
V. A. Mikhailets, A. A. Murach, \textit{Refined scales of spaces and elliptic boundary-value problems. II}, Ukrainian Math. J. \textbf{58} (2006), no.~3, 398--417.

\bibitem{MikhailetsMurach06UMJ11}
V. A. Mikhailets, A. A. Murach, \textit{A regular elliptic boundary-value problem for a homogeneous equation in a two-sided refined scale of spaces}, Ukrainian Math.~J. \textbf{58} (2006), no.~11, 1748--1767.

\bibitem{MikhailetsMurach07UMJ5}
V. A. Mikhailets, A. A. Murach, \textit{Refined scales of spaces and elliptic boundary-value problems. III},  Ukrainian Math. J. \textbf{59} (2007), no.~5, 744--765.

\bibitem{MikhailetsMurach08UMJ4}
V. A. Mikhailets, A. A. Murach, \textit{An elliptic boundary-value problem in a two-sided refined scale of spaces}, Ukrainian Math. J. \textbf{60} (2008), no.~4, 574--597.

\bibitem{MikhailetsMurach08MFAT1}
V. A. Mikhailets, A. A. Murach, \textit{Interpolation with a function para\-me\-ter and refined scale of spaces}, Methods Funct. Anal. Topology \textbf{14} (2008), no.~1, 81--100.

\bibitem{MikhailetsMurach10}
V. A. Mikhailets, A. A. Murach, \textit{H\"ormander spaces, interpolation, and elliptic problems}, Institute of Mathematics of NAS of Ukraine, Kiev, 2010 (Russian); available at arXiv:1106.3214.

\bibitem{MikhailetsMurach12BJMA2}
V. A. Mikhailets, A. A. Murach, \textit{The refined Sobolev scale,
inter\-po\-la\-tion, and elliptic problems}, Banach J. Math. Anal. \textbf{6} (2012), no.~2, 211--281.

\bibitem{MikhailetsMurach14}
V. A. Mikhailets, A. A. Murach, \textit{H\"ormander spaces, interpolation, and elliptic problems}, De Gruyter, Berlin, 2014.

\bibitem{NazarovPileckas93}
S. Nazarov, K. Pileckas, \textit{On noncompact free boundary problems for the plane stationary Navier--Stokes equations}, J. Reine Angew. Math. \textbf{438} (1993), 103--141.

\bibitem{Peetre68}
J. Peetre, \textit{On interpolation functions.~II}, Acta Sci. Math. (Szeged) \textbf{29} (1968), no.~1--2, 91--92.

\bibitem{Roitberg64}
Ya. A. Roitberg, \textit{Elliptic problems with nonhomogeneous boundary conditions and local increase of smoothness up to the boundary for generalized solutions}, Dokl. Math. \textbf{5} (1964), 1034--1038.

\bibitem{Roitberg65}
Ya. A. Roitberg, \textit{A theorem on the homeomorphisms induced in $L_{p}$ by elliptic operators and the local smoothing of generalized solutions}, Ukra\"in. Mat. Zh. \textbf{17} (1965), no.~5, 122--129 (Russian).

\bibitem{Roitberg96}
Ya. A. Roitberg, \textit{Elliptic Boundary Value Problems in the Spaces of Distributions}, Kluwer Acad. Publ., Dordrecht, 1996.

\bibitem{Roitberg99}
Ya.~A. Roitberg, \textit{Boundary Value Problems in the Spaces of Distributions}, Kluwer Acad. Publ., Dordrecht, 1999.

\bibitem{Seneta76}
E. Seneta, \textit{Regularly varying functions}, Springer, Berlin, 1976.

\bibitem{Slenzak74}
G. Slenzak, \textit{Ellptic problems in a refined scale of spaces}, Moscow Univ. Math. Bull. \textbf{29} (1974), no.~3--4, 80--88.

\bibitem{VolevichPaneah65}
L. R. Volevich, B. P. Paneah, \textit{Certain spaces of generalized functions and embedding theorems}, Uspehi Mat. Nauk \textbf{20} (1965), no.~1, 3--74 (Russian).











\end{thebibliography}
\end{document}